\documentclass[a4paper,11pt]{amsart}
\title[Signs of knot pol.\ eval.\ from a topol.\ perspective]{Signs of knot polynomial evaluations from a topological perspective}
\author{Luana Jost}
\address{Institut de Math\'ematiques de Toulouse, CNRS, Universit\'e de Toulouse, 118, route de Narbonne, 31062 Toulouse Cedex 9, France}
\email{\myemail{ljost@math.univ-toulouse.fr}}
\author{Lukas Lewark}
\address{ETH Z\"urich, R\"amistrasse 101, 8092 Z\"urich, Switzerland}
\email{\myemail{llewark@math.ethz.ch}}
\urladdr{\url{https://people.math.ethz.ch/~llewark/}}
\keywords{Jones polynomial, $Q$-polynomial,
double branched cover, linking form, Seifert matrix,
unknotting number.}
\subjclass{57K10, 57K14}
%
%
\usepackage[latin1]{inputenc}
\usepackage{amsmath,amssymb,stmaryrd,thmtools,enumerate,graphicx,xcolor,enumitem,amscd,url,mathtools,microtype,tikz-cd,caption,amsthm}
\usepackage[bookmarks=true,pdfencoding=auto,hidelinks]{hyperref}
\hypersetup{pdfauthor={\authors},pdftitle={\shorttitle}}
\usepackage[nameinlink]{cleveref}
\let\cref\Cref
\crefname{subsection}{section}{sections}
\Crefname{subsection}{Section}{Sections}
\Crefname{enumi}{}{}
\crefname{equation}{}{}
\newcommand{\myemail}[1]{\href{mailto:#1}{#1}}
%
%

\newcommand{\qua}{\hskip 0.4em \ignorespaces}
\def\arxiv#1{\relax\ifhmode\unskip\qua\fi
\href{http://arxiv.org/abs/#1}%
{\tt arXiv:\penalty -100\unskip#1}}

\def\MR#1{\relax\ifhmode\unskip\qua\fi
\href{https://mathscinet.ams.org/mathscinet-getitem?mr=#1}{\tt MR#1}}
\def\ZB#1{\relax\ifhmode\unskip\qua\fi
\href{https://zbmath.org/?q=an:#1}{\tt Zbl\:#1}}
\def\xox#1{\csname xx#1\endcsname}

\renewenvironment{thebibliography}[1]{
  \begin{oldthebibliography}{#1}\small
    \setlength{\itemsep}{.5ex}
    \setlength{\parskip}{0em}
}
{
  \end{oldthebibliography}
}
%
%
\pdfsuppresswarningpagegroup=1
\pdfstringdefDisableCommands{%
  \def\unskip{}%
  \def\\{}%
  \def\texttt#1{<#1>}%
  \def\Rightarrow{=>}
}
%
%
\let\stdthebibliography\thebibliography
\let\stdendthebibliography\endthebibliography

%
%
\numberwithin{equation}{section}
\newtheorem{lemma}{Lemma}[section]
\newtheorem{theorem}[lemma]{Theorem}
\newtheorem{corollary}[lemma]{Corollary}
\newtheorem{proposition}[lemma]{Proposition}
\newtheorem{conjecture}[lemma]{Conjecture}
\theoremstyle{definition}
\newtheorem{definition}[lemma]{Definition}
\newtheorem{remark}[lemma]{Remark}
\newtheorem{example}[lemma]{Example}
%
%
\captionsetup{width=.8\linewidth, font=small}

%
%

\newcommand{\Z}{\mathbb{Z}}
\newcommand{\Q}{\mathbb{Q}}
\newcommand{\R}{\mathbb{R}}

\newcommand{\legendre}[2]{\ensuremath{\biggl(\frac{#1}{#2}\biggr)}}
\newcommand{\legendres}[2]{\ensuremath{\bigl(\frac{#1}{#2}\bigr)}}
\DeclareMathOperator{\Arf}{Arf}
\DeclareMathOperator{\coker}{coker}
\DeclareMathOperator{\ord}{ord}
\DeclareMathOperator{\lk}{lk}

\usepackage[nice]{nicefrac}



\begin{document}
\begin{abstract}
We prove that for knots, the evaluation of the Jones polynomial at the sixth root of unity, as well as the evaluation of the $Q$\nobreakdash-polynomial at the reciprocal of the golden ratio, are uniquely determined by the oriented homeomorphism type of the double branched covering. We provide explicit formulae for these evaluations in terms of the linking pairing. The proof proceeds via so-called singular determinants, from which we also extract new lower bounds for the unknotting numbers of knots and links.
\end{abstract}
\maketitle
\section{Introduction}
The \emph{Jones polynomial} $V_L(t) \in \Z[t^{\pm 1/2}]$ of a link $L \subset S^3$
may be defined by
the value $V(t) = 1$ for the unknot~ and
 the skein relation~\cite{zbMATH03899758}
\begin{equation}\label{eq:skeinjones}
t^{-1}V_{L_+}(t) - tV_{L_-}(t)
=
(t^{1/2} - t^{-1/2})V_{L_0}(t),
\end{equation}
where $L_+, L_-, L_0$ are oriented links that locally differ as follows:
\begin{figure}[h]
\centering
\includegraphics[scale=.25]{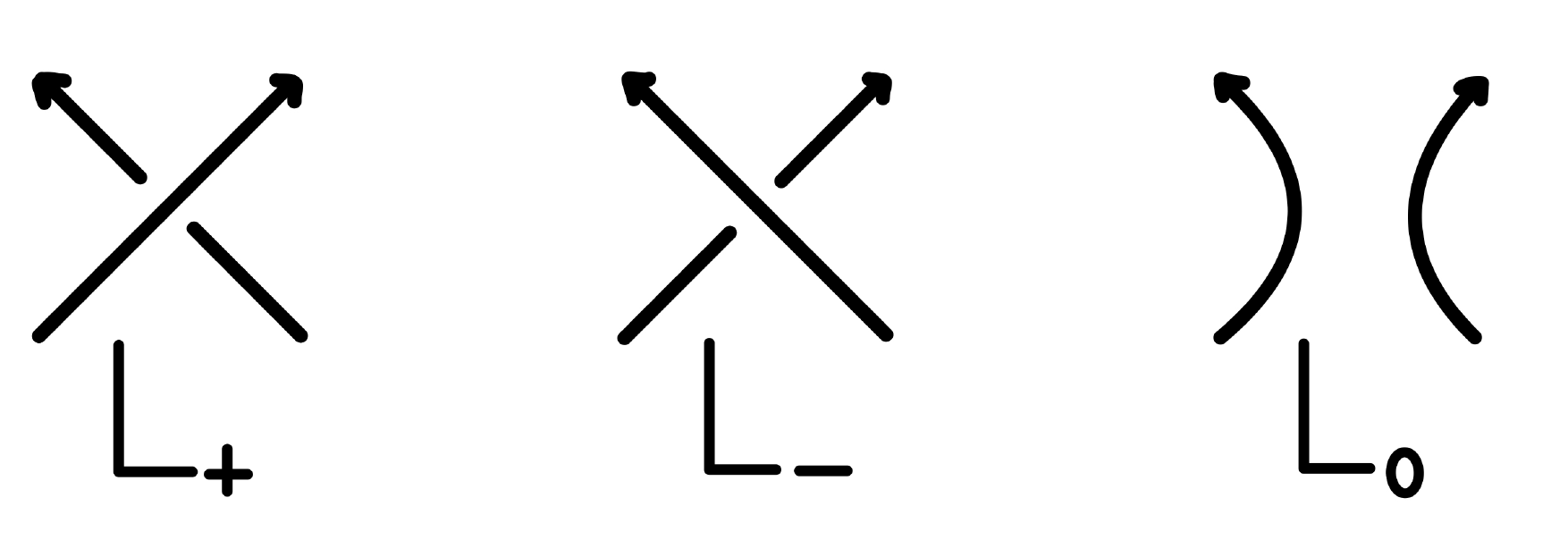}
\end{figure}

It is a fundamental open problem to find a 3-dimensional topological interpretation of the Jones polynomial~\cite{zbMATH04092365,zbMATH04092352}.
Such an interpretation does, however, exist for the evaluation of the Jones polynomial at the
primitive first, second, third, fourth and sixth roots of unity.
For simplicity, let us restrict ourselves to knots (the case of links will be discussed in \cref{sec:links}); note that $V_K(t) \in \Z[t^{\pm 1}]$ for a knot~$K$. The following hold~\cite{zbMATH03899758,zbMATH03982072}:
\begin{align}
\notag V_K(1)  & = V_K(e^{2\pi i/3}) = 1, \\
\notag V_K(-1) & = (-1)^{(\det K - 1)/2} \cdot \det(K), \\
\notag V_K(i) & = (-1)^{\Arf(K)}, \\
V_K(e^{2\pi i/6}) & = \varepsilon_K \cdot (i\sqrt{3})^{\dim H_1(D_K; \mathbb{F}_3)}.\label{eq:atomega}
\end{align}
Here, the \emph{determinant} $\det(K)$ is an odd positive integer, defined as the order of the first homology group $H_1(D_K; \mathbb{Z})$ of the double branched covering $D_K$ of $S^3$ along~$K$; the invariant $\Arf(K)$ equals $1$ if $\det K \equiv \pm 1\pmod{8}$, and equals $-1$ if $\det K \equiv \pm 3\pmod{8}$; and $\varepsilon_K \in \{\pm 1\}$.

The reader may have noted that the above formulae for $V_K(-1)$, $V_K(i)$, and $V_K(e^{2\pi i/6})$ depend only on $D_K$, with the potential exception of the sign~$\varepsilon_K$.
Our first main result closes this gap by providing a characterization of~$\varepsilon_K$ in terms of $H_1(D_K; \mathbb{Z})$ and its linking pairing.
Since $H_1(D_K; \mathbb{F}_3) \cong H_1(D_K; \mathbb{Z}) \otimes \mathbb{F}_3$,
it follows that $V_K(e^{2\pi i/6})$ is fully determined by $H_1(D_K; \mathbb{Z})$ and its linking pairing.

Here, a linking pairing is a bilinear non-degenerate form $H_1(M; \mathbb{Z}) \times H_1(M; \mathbb{Z}) \to \mathbb{Q}/\mathbb{Z}$ that is defined for all rational homology 3-spheres~$M$.
If the underlying group $H_1(M; \mathbb{Z})$ is of odd order (as is the case for ${M = D_K}$), then the isometry type of such pairings is determined by
isometry invariants ${r_{p,k}\in\Z/2}$, where $p$ ranges over odd primes and $k$ over positive integers~\cite{Wall}. We will discuss this in detail in \cref{sec:lkform}. For now, suffice it to say that the $r_{p,k}(K)$ are invariants of the knot~$K$ determined by~$D_K$.
Our first main result is:
\begin{restatable}{theorem}{MainTheorem}
\label{Main-Theorem}
Let a knot $K$ be given. Denote by $\alpha \geq 0$ and $q \geq 1$ the unique integers such that $3$ does not divide $q$ and $\det(K) = 3^\alpha q$.
Then
\[
V_K(e^{2\pi i/6}) = \nu(q) \cdot (-1)^{\alpha + \dim H_1(D_K; \mathbb{F}_3) + \sum_{k=1}^\infty \, r_{3,k}(K)}
\cdot (i\sqrt{3})^{\dim H_1(D_K; \mathbb{F}_3)},
\]
where for all $\eta \in \Z$ not divisible by $2$ or $3$,
\begin{align*}
    \nu(\eta) = \left\{\begin{array}{ll} 1 &\quad \eta \equiv \pm 1 \pmod{12}, \\
     -1 &\quad \eta \equiv \pm 5 \pmod{12}. \end{array}\right.
\end{align*}
\end{restatable}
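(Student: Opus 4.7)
The strategy is to use the known identity \eqref{eq:atomega}, which already pins down $V_K(e^{2\pi i/6})$ up to the sign $\varepsilon_K \in \{\pm 1\}$, so the real content of the theorem is to compute $\varepsilon_K$ in terms of $\alpha$, $q$, and the 3-primary linking-form data. My plan is to first express $V_K(e^{2\pi i/6})$ as a normalised Gauss sum on the linking pairing of $D_K$, and then to evaluate this Gauss sum via Wall's classification of odd-order linking pairings into rank-one cyclic summands.

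The first step is to convert the skein-theoretic definition of the Jones polynomial into a formula in terms of a Seifert matrix $S$ of $K$. By analogy with the well-known identity $|V_K(-1)| = |\det(S+S^\top)|$, I expect a relation of the form
\[
V_K(e^{2\pi i/6}) \;=\; c \cdot \det\nolimits_{\mathrm{sing}}\bigl(S + \zeta\, S^\top\bigr)
\]
for a suitable primitive 6-th root of unity $\zeta$ and a universal normalising factor $c$, where $\det_{\mathrm{sing}}$ is the ``singular determinant'' advertised in the abstract, obtained by discarding the degenerate directions of $S + \zeta S^\top$. The presentation of the linking pairing $\lambda$ on $H_1(D_K;\Z)$ by the symmetrisation $S+S^\top$ should then let me rewrite this singular determinant as a normalised Gauss sum
\[
\frac{1}{\sqrt{\det(K)}} \sum_{x \in H_1(D_K;\Z)} \chi\bigl(\lambda(x,x)\bigr),
\]
where $\chi : \Q/\Z \to \C^\times$ is an additive character of order dividing~$6$.

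I would then invoke Wall's classification to decompose $\lambda$ orthogonally into rank-one cyclic pieces $(\Z/p^k,\, a_{p,k}/p^k)$, indexed by odd primes $p$ and positive integers $k$. The Gauss sum factors over these pieces, and each factor is a classical quadratic Gauss sum $\sum_{x \in \Z/p^k} e^{2\pi i\, a_{p,k}\,x^2/p^k}$. For $p \neq 3$ the character $\chi$ is unramified on $\Z/p^k$, so each such factor has modulus $\sqrt{p^k}$ and a sign controlled by $p \bmod 4$ and by the quadratic character $\legendres{a_{p,k}}{p}$; the latter is exactly encoded by $r_{p,k}$. Multiplying the signs for all $p\neq 3$ and invoking quadratic reciprocity should yield the factor $\nu(q)$, with the $r_{p,k}$ contributions for $p\neq 3$ all cancelling after the global normalisation forced by \eqref{eq:atomega}.

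The main obstacle, as I see it, lies in the $p=3$ summands. Here $\chi$ is ramified on each $\Z/3^k$ factor (its order shares the prime $3$ with the modulus), so the standard quadratic Gauss-sum evaluation does not apply; this is precisely where the ``singular'' nature of the determinant enters the picture. A direct case-by-case evaluation on each elementary divisor $3^k$ should simultaneously produce the modulus $(i\sqrt{3})^{\dim H_1(D_K;\mathbb{F}_3)}$ and the sign contribution $(-1)^{\alpha+\dim H_1(D_K;\mathbb{F}_3)+\sum_k r_{3,k}}$, and verifying the precise combinatorial match between the $3$-adic data of $\lambda$ and these two factors will be the principal technical task of the proof.
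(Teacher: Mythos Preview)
Your proposal outlines a genuinely different route from the one the paper takes, and the crucial first step is not established.

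\textbf{What the paper does.} The paper does not attempt to write $V_K(e^{2\pi i/6})$ as a Gauss sum on the linking pairing. Instead it invokes Lipson's theorem as a black box: $\varepsilon_K = \delta_3(K)$, where $\delta_p$ is Lipson's \emph{singular determinant}, defined for any symmetrized Seifert matrix $M$ as a Legendre symbol
\[
\delta_p(M) \;=\; \legendre{(-1)^{d_p + (n+\mu-1)/2}\det N}{p},
\]
with $N$ a maximal $p$-nondegenerate block of $M$ after a suitable unimodular base change. The heart of the argument is then \cref{prop:singdetwall}, which computes this Legendre symbol in terms of the Wall invariants $r_{p,k}$. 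That proposition is proved by pure matrix algebra: one applies a $p$-adic diagonalisation lemma (\cref{Lemma-matrixtransformation2}) and the general Jacobi identity to relate $\det N$ to the product of the diagonal entries of the corresponding block of $M^{-1}$, whose quadratic residue classes mod $p$ are exactly the $r_{p,k}$. The factor $\nu(q)$ drops out at the end from the elementary identity $\legendres{q}{3}\cdot(-1)^{(q-1)/2}=\nu(q)$; quadratic reciprocity is never used (the paper says so explicitly), and no $r_{p,k}$ for $p\neq 3$ ever appear.

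\textbf{Where your proposal has a gap.} Your entire argument rests on the first step: producing a formula
\[
V_K(e^{2\pi i/6}) \;\stackrel{?}{=}\; \frac{1}{\sqrt{\det K}}\sum_{x\in H_1(D_K;\Z)} \chi(\lambda(x,x))
\]
for an explicit additive character $\chi$. You motivate this only ``by analogy'' and by the hope that a singular determinant of $S+\zeta S^{\top}$ can be rewritten this way. Neither of these is a proof, and the passage from a skein-defined polynomial to a state sum over $H_1(D_K;\Z)$ is exactly the nontrivial bridge that Lipson's theorem (used in the paper) supplies in a different form. Without an independent derivation of that Gauss-sum identity, everything downstream---the factorisation over Wall summands, the appeal to quadratic reciprocity to assemble $\nu(q)$, the ramified analysis at $p=3$---is suspended in midair. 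If you can supply that identity (for instance via the known relation between $V_K(e^{2\pi i/6})$ and the $SO(3)$ Witten--Reshetikhin--Turaev invariant of $D_K$, together with a Gauss-sum formula for the latter), your outline becomes a viable alternative proof; but as written, the proposal assumes precisely the kind of link between the Jones evaluation and the linking form that the theorem is meant to establish.
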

Our proof of the above theorem is based on Lipson's algebraic characterization of the sign~$\varepsilon_K$ in \eqref{eq:atomega}~\cite{Lipson}. Namely, Lipson defined for all odd primes $p$ and links $L$ a \emph{singular determinant}~$\delta_p(L)\in \{\pm 1\}$ in terms of a symmetrized Seifert matrix of~$L$, and showed that $\delta_3(K) = \varepsilon_K$ for all knots~$K$ (links will be discussed in \cref{sec:links}).
Another formula for $\varepsilon_K$ in terms of the symmetrized Seifert matrix is given in~\cite{zbMATH04111602}.
The singular determinant~$\delta_5$ was independently defined by Rong~\cite{zbMATH04184437} to analyze the $Q$-polynomial.

Our second main result features an obstruction for the unknotting number of knots and links
involving the singular determinants~$\delta_p(L)$
and one of the oldest lower bounds for the unknotting number of a knot $K$ 
given by $u(K) \geq \dim H_1(D_K; \mathbb{F}_p)$~\cite{zbMATH03026573}.
This bound can easily be generalized to the bound
\begin{equation}\label{eq:wendt}
u(L) \geq \dim H_1(D_L; \mathbb{F}_p) - c(L) + 1,
\end{equation}
that holds for all links~$L$%
\footnote{There is a bijection between $\mathbb{F}_p \oplus H_1(D_L; \mathbb{F}_p)$ and the set of Fox $p$-colorings of~$L$, and so \cref{eq:wendt} may also be stated as follows:
$u(L) \geq \log_p(\text{col}_p(L)) - c(L)$, where $\text{col}_p(L)$ is the number of Fox $p$-colorings of~$L$.}.
Here, the \emph{unknotting number}~$u(L)$ of a link~$L$ is the minimal number of crossing changes necessary to unknot~$L$, i.e.~to turn it into an unlink.
Now, our following theorem makes it possible to improve the lower bound \eqref{eq:wendt} by~$1$.
\begin{restatable}{theorem}{signedu}\label{thm:signedu}
Let $p$ be an odd prime 
and $L$ a link with $c(L)$ components, such that the unknotting number $u(L)$ of $L$ satisfies
\[
u(L) = \dim H_1(D_L; \mathbb{F}_p) - c(L) + 1.
\]
Let $u(L) = u_+ + u_-$ such that $L$ can be unknotted
by changing $u_{+}$ positive crossings and $u_{-}$ negative crossings.
Then, depending on the remainder of $p$ modulo~8, one of the following equalities holds.
\begin{align*}
& p \equiv 1 \,\Longrightarrow\, \delta_p(L) = 1,
& p \equiv 3 \,\Longrightarrow\, \delta_p(L) = (-1)^{u_{-}},\\
& p \equiv 5 \,\Longrightarrow\, \delta_p(L) = (-1)^{u(L)},
& p \equiv 7 \,\Longrightarrow\, \delta_p(L) = (-1)^{u_{+}}.
\end{align*}
\end{restatable}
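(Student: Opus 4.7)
The plan is to proceed by induction on $u(L)$, tracking how the singular determinant $\delta_p$ transforms along an optimal unknotting sequence. For the base case $u(L) = 0$, the link $L$ is an unlink with $\dim H_1(D_L; \mathbb{F}_p) = c(L) - 1$ and $u_+ = u_- = 0$; a direct computation from a connected Seifert surface built by plumbing untwisted bands yields the zero symmetrized Seifert matrix, whence $\delta_p(\text{unlink}) = 1$, matching all four formulas.

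For the inductive step, let $L'$ be the result of the first crossing change in a sequence realizing the split $(u_+, u_-)$. After suitably stabilizing the Seifert surface so that the changed crossing sits on a single twisted band, the symmetrized Seifert matrix $S = V + V^T$ of $L$ differs from the corresponding matrix for $L'$ only by adding $\pm 2$ to a single diagonal entry, a symmetric rank-one update. Hence $\dim H_1(D_{L'}; \mathbb{F}_p)$, the $\mathbb{F}_p$-corank of $S$, can change by at most one. Combined with the bound \eqref{eq:wendt} applied to $L'$ and with $u(L') \le u(L)-1$, this forces $u(L') = u(L)-1$ and $\dim H_1(D_{L'}; \mathbb{F}_p) = \dim H_1(D_L; \mathbb{F}_p) - 1$, so $L'$ again satisfies the tight bound, with split obtained by decrementing $u_+$ or $u_-$ by one according to the sign of the crossing being changed.

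The key computational step is then a transformation lemma for $\delta_p$ under such a rank-decreasing diagonal update. Adopting the convention that a positive-to-negative crossing change (one of the $u_+$ changes) adds $-2$ to a diagonal entry and a negative-to-positive change (one of the $u_-$ changes) adds $+2$, one establishes
\[
\delta_p(L') = \legendres{-2}{p} \cdot \delta_p(L) \quad \text{(positive change)}, \qquad \delta_p(L') = \legendres{2}{p} \cdot \delta_p(L) \quad \text{(negative change)}.
\]
The proof unpacks Lipson's definition of $\delta_p$ in terms of a suitable non-singular principal minor of $S$ modulo $p$, and performs a change of basis aligning the perturbation with the newly-created kernel direction, so that the transformation factor is exactly $\legendres{\pm 2}{p}$ after absorbing a nonzero square. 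Iterating along the unknotting sequence gives
\[
\delta_p(L) = \legendres{-2}{p}^{u_+} \cdot \legendres{2}{p}^{u_-},
\]
and a case analysis using $\legendres{2}{p} = +1$ iff $p \equiv \pm 1 \pmod{8}$ together with $\legendres{-1}{p} = +1$ iff $p \equiv 1 \pmod{4}$ recovers the four formulas in the statement.

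The main obstacle is the transformation lemma itself: making the signs come out precisely, and not merely up to a square, requires explicit analysis of how Lipson's principal minor reorganizes when the $\mathbb{F}_p$-rank of $S$ jumps by one under a diagonal perturbation, together with careful bookkeeping of quadratic-residue contributions. A secondary technicality is arranging the Seifert surface to be adapted to each successive crossing change, which can be handled by standard stabilization moves and leaves $\delta_p$ invariant since it is a link invariant.
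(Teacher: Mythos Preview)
Your approach is essentially the same as the paper's: induction on $u(L)$, reduction to a transformation formula for $\delta_p$ under a $\pm 2$ diagonal perturbation of the symmetrized Seifert matrix, and case analysis via the values of $\legendres{\pm 2}{p}$. The paper's \cref{lemma:goodmatrices} makes your ``transformation lemma'' precise: after basis changes one lands either in the block form $P\oplus(a\mp 1)$ or in $P\oplus\bigl(\begin{smallmatrix}0&1\\1&a\mp 1\end{smallmatrix}\bigr)$ modulo~$p$, and the rank-drop condition $d_p(L')=d_p(L)-1$ forces the first case with $a\equiv 1$, giving exactly the factor $\legendres{\mp 2}{p}$. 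One small inconsistency: with the paper's conventions, changing a positive crossing \emph{increases} the diagonal entry by~$2$ (not decreases), though your transformation formulas $\delta_p(L)=\legendres{-2}{p}^{u_+}\legendres{2}{p}^{u_-}$ and the resulting four cases are correct regardless.
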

As special cases, this theorem recovers two previously known results:
for $L$ a knot with $u(L) = 1$, \cref{thm:signedu} is equivalent to a classical obstruction by Lickorish~\cite{zbMATH03982071,zbMATH04048673}, see \cref{rmk:lickorish};
and for $p = 3$ and $L$ a knot, \cref{thm:signedu} was established by Traczyk~\cite{zbMATH01341751}, phrased using $\varepsilon_K$ instead of $\delta_3$ (see also \cref{cor:traczyk});

For primes $p\equiv 1\pmod{4}$, \cref{thm:signedu} can be used on its own to show 
$u(L) \geq \dim H_1(D_L; \mathbb{F}_p) - c(L) + 2$ for a given link~$L$.
For primes $p\equiv 3\pmod{4}$, one may combine \cref{thm:signedu} with 
a complementary restriction on the signs of crossing changes coming from another toolkit. This
strategy was successfully implemented using Heegaard--Floer homology in \cite{zbMATH05242998,zbMATH06492012} for $p = 3$ to show that the pretzel knot $P(3,3,3)$ has $u\geq 3$
(using Traczyk's obstruction, which appears as special case of \cref{thm:signedu}).

The paper is structured as follows. 
In \cref{sec:singdet}, we analyze singular determinants of links and prove \cref{thm:signedu}.
\cref{sec:lkform} provides a characterization of singular determinants in terms of linking forms, see \cref{prop:singdetwall}.
In \cref{sec:eval}, this characterization is then used to establish \cref{Main-Theorem} and an analogous result for the $Q$-polynomial (\cref{thm:q}).
As a consequence, we find a counterexample to a conjecture by Stoimenow, see
\cref{ex:stoimenow}.
Finally, \cref{sec:links} discusses which of our results can be extended from knots to links.

\subsection*{Acknowledgments}
The authors warmly thank David Cimasoni for comments on a first version of the article.

\section{Singular link determinants}
\label{sec:singdet}
In this section, we analyze
link invariants called singular determinants, which were introduced by
Lipson \cite{Lipson}, and establish how they behave under crossing changes.
The definition of singular determinant involves a basic number theoretic construction, namely the \emph{Legendre symbol}
\[
\legendre{a}{p} \in \{-1,0,1\},
\]
which we briefly review now.
The Legendre symbol is defined for all $a\in\mathbb{Z}$ and all odd primes $p$, taking the value $0$ if $p$ divides $a$, $1$ if $p$ does not divide $a$ and $a$ is a quadratic residue modulo $p$
(i.e.~$a \equiv b^2 \pmod{p}$ for some $b\in\mathbb{Z}$), and $-1$ if $p$ does not divide $a$ and $a$ is a quadratic non-residue modulo $p$. The Legendre symbol is multiplicative, i.e.
\[
\legendre{ab}{p} = \legendre{a}{p} \legendre{b}{p}.
\]
In this article, we will not need the so-called quadratic reciprocity of the Legendre symbol.
For fixed values of $a$, there are closed formulae for the Legendre symbol, such as the following:
\begin{align}
\label{eq:legendre-1}
\legendre{-1}{p} & = \left\{\begin{array}{rl}
1  & \text{if } p \equiv 1 \pmod{4}, \\ 
-1 & \text{if } p \equiv 3 \pmod{4},
\end{array}\right.
\\[1ex]
\label{eq:legendre2}
\legendre{2}{p} & = \left\{\begin{array}{rl}
1  &\text{if }  p \equiv \pm 1 \pmod{8}, \\ 
-1 &\text{if }  p \equiv \pm 3 \pmod{8}.
\end{array}\right.
\end{align}
The Legendre symbol encodes quadratic residues not just modulo primes, but also modulo arbitrary integers~$q \geq 1$. Indeed, if $a$ is coprime with~$q$, then $a$ is a quadratic residue modulo $q$ if and only if $\legendres{a}{p} = 1$ for all prime divisors $p$ of~$q$. In particular, $a$ is a quadratic residue modulo a prime power $p^k$ if and only if~$\legendres{a}{p} = 1$.

Let us now define the singular determinant; first for arbitrary matrices, and then for links, via their Seifert matrices.
\begin{definition}
Let $M \in \mathbb{Z}^{n\times n}$ be symmetric with even diagonal entries.
Let $p$ be an odd prime.
Pick $T \in \mathbb{Z}^{n\times n}$ with $\det T = \pm 1$ such
that $TMT^{\top}$ has the following shape:
it has an upper left square submatrix $N$ with determinant not divisible by~$p$, and all its entries outside of $N$ are divisible by~$p$
(here, $N$ may be the $0\times 0$ matrix, which has determinant $1$ by convention).
In other words, modulo $p$, $TMT^{\top}$ equals the block sum of $N$ and a zero matrix.
Denote by $\mu = \mu(M)$ the dimension of the kernel of $M$ over $\mathbb{F}_2$ plus one,
and by $d_p = d_p(M)$ the dimension of the kernel of $M$ over $\mathbb{F}_p$.
Note that $N$ has size $n - d_p$.
Then the \emph{singular determinant} $\delta_p(M)$ of $M$ is defined as
\[
\delta_p(M) = \legendre{(-1)^{d_p + (n + \mu - 1)/2}\cdot\det N}{p}
\in \{-1,+1\}.
\]
\end{definition}
It follows from the assumption that the diagonal entries of $M$ are even that $(n - \mu + 1)/2$ is an integer.
For $\delta_p(M)$ to be well-defined, one needs to moreover check that a matrix~$T$ as above exists, and that $\delta_p(M)$ does not depend on the choice of $T$. For $p = 3$, this has been done by Lipson, who also notes that his proof easily adapts to all odd primes~\cite[Note~(2)]{Lipson}%
\footnote{The formula given in \cite[Note~(2)]{Lipson} contains a typo: $i$ should be $-1$.}.

A \emph{Seifert surface} of an oriented link $L\subset S^3$ is
an oriented connected compact surface $\Sigma \subset S^3$ with $\partial \Sigma = L$. For a simple closed curve $\beta \subset \Sigma$,
denote by $\beta^+ \subset S^3\setminus\Sigma$ the push-off of $\beta$ in the positive normal direction. Then the \emph{symmetrized Seifert form} of $\Sigma$ is the symmetric bilinear form $H_1(\Sigma;\mathbb{Z}) \times H_1(\Sigma;\mathbb{Z}) \to \mathbb{Z}$ given for all simple closed curves $\beta,\gamma\subset\Sigma$ by
\[
([\beta], [\gamma]) \mapsto \lk(\beta, \gamma^+) + \lk(\beta^+, \gamma).
\]
Choosing a basis of $H_1(\Sigma;\mathbb{Z})$ yields
a symmetric matrix with even diagonal entries for this bilinear form, called a \emph{symmetrized Seifert matrix} of $\Sigma$ (and of~$L$). If $M$ and $M'$ are symmetrized Seifert matrices of~$L$, then they can be obtained from each other by a finite sequence of basis changes, i.e.\ $M \rightsquigarrow TMT^{\top}$ for $\det T = \pm 1$, and transformations of the form
\begin{equation}\label{eq:Seq}
M \leftrightsquigarrow \left(\begin{array}{c|cc} M & 0 & 0 \\\hline 0 & 0 &  1 \\ 0 & 1 & 0 \end{array}\right).
\end{equation}
This follows from the well-known theorem that (non-symmetrized) Seifert matrices of the same link are S-equivalent.
Since transformations of the form~\eqref{eq:Seq} do not change~$\delta_p(M)$~\cite{Lipson},
we obtain the following well-defined link invariant~$\delta_p(L)$.
\begin{definition}\label{def:singdetL}
The \emph{singular determinant} $\delta_p(L)$ of a link $L$ is defined as~$\delta_p(M)$, for $M$ any symmetrized Seifert matrix of~$L$.
\end{definition}

When $M$ is a symmetrized Seifert matrix of~$L$, then
$\mu(M)$ equals the number of components of $L$, i.e.\ $\mu(M) = c(L)$,
and $d_p(M) = d_p(L) = \dim(H_1(D_L; \mathbb{F}_p))$.

\begin{remark}
The exponent of $(-1)$ in the definition of $\delta_p(M)$
is the sum of three terms:
\begin{itemize}
\item $n/2$, which assures invariance of $\delta_p(M)$ under \eqref{eq:Seq};
\item $(-\mu+1)/2$, which leads to a value $\delta_p(L) = 1$ for an unlink $L$;
\item $d_p + \mu - 1$, which results in $\delta_3(K) = \varepsilon_K$ for knots~$K$.
\end{itemize}
\end{remark}
\begin{remark}\label{rmk:13}
From the multiplicativity of the Legendre symbol and \eqref{eq:legendre-1},
it follows that
\begin{alignat*}{3}
\delta_p(M) & = \displaystyle\legendre{\det M}{p} & \quad \text{if } p \equiv 1\pmod{4}, \\
\delta_p(M) & = (-1)^{d_p + (n + \mu - 1)/2}\cdot\legendre{\det M}{p}
&  \quad\text{if } p \equiv 3\pmod{4}.
\end{alignat*}
\end{remark}
\begin{remark}
One can also define the singular determinant $\delta_p(L)$ of a link $L$ in terms of a Gordon--Litherland matrix~$R$.
Such a matrix is associated with a (not necessarily orientable or connected) spanning surface $S$ of~$L$~\cite{zbMATH03607187}.\footnote{%
If $S$ is a checkerboard surface coming from a diagram of $L$, then $R$ is called a \emph{Goeritz matrix}~\cite{zbMATH02546997}.}
If $S$ is orientable and connected, i.e.~a Seifert surface of $L$, then $R$ is a symmetrized Seifert matrix of $S$.
If $R$ and $R'$ are two Gordon--Litherland matrices of $L$, then they can be obtained from each other by a finite sequence of
basis changes and transformations of the form~\cite[Theorem~11]{zbMATH03607187} 
\[
R \leftrightsquigarrow \left(\begin{array}{c|c} R & 0 \\\hline 0 &  1 \end{array}\right),\qquad
R \leftrightsquigarrow \left(\begin{array}{c|c} R & 0 \\\hline 0 & -1 \end{array}\right),\qquad
R \leftrightsquigarrow \left(\begin{array}{c|c} R & 0 \\\hline 0 &  0 \end{array}\right).
\]
Trying to use the same formula as above to define $\delta_p(R)$, one runs into the problem that $n + \mu - 1$  need not necessarily be even (since $R$ may have odd diagonal entries). Moreover, the first two transformations do not leave the formula invariant.
This may be fixed by introducing another correction term, as we describe now.
Let $R \in \mathbb{Z}^{n\times n}$ be symmetric. A vector $v\in \mathbb{Z}^n$ is called \emph{characteristic} for $R$ if $w^{\top} R w \equiv v^{\top} R w \pmod{2}$ for all $w\in\mathbb{Z}^n$. Equivalently, $v_i \equiv R_{ii} \pmod{2}$ for all~$i\in\{1,\ldots,n\}$. Let us define the \emph{oddity} $o(R) \in \mathbb{Z}/8$ as $v^{\top} R v \pmod{8}$, for any characteristic vector $v$.
An elementary calculation shows that $o(R)$ is indeed well-defined modulo 8 (see e.g.~\cite{zbMATH01224949}), although for our application, $o(R) \pmod{4}$ suffices.
Now, one easily checks that
\[
\delta_p(R) = \legendre{(-1)^{d_p + (n + \mu - 1 - o(R))/2}\cdot\det N}{p}
\in \{-1,+1\}
\]
is invariant under the above three transformations. Furthermore, it equals our previous definition of $\delta_p(R)$
if $R$ has even diagonal entries (and thus~${o(R)\equiv 0}$).
Thus this formula may be used to define the singular determinants of links in terms of spanning surfaces. One observes that for disconnected surfaces~$S$, $\mu(R)$ equals $c(L)$ minus the number of components of $S$ plus one.
Note that \cref{rmk:13} again applies, and so for $p\equiv 1 \pmod{4}$,
\[
\delta_p(R) = \legendre{\det N}{p}.
\]
Indeed, Rong~\cite{zbMATH04184437} used Gordon--Litherland matrices to define~$\delta_5$.
\end{remark}

For the proof of \cref{thm:signedu}, we will need the following Lemma.
\begin{lemma}\label{lemma:goodmatrices}
Let the link $L_-$ be obtained from the link $L_+$ by changing a positive crossing.  Let $p$ be an odd prime.
Then there are symmetrized Seifert matrices $M_+$, $M_-$ for $L_+$, $L_-$, respectively, satisfying the following. The matrices are of the same size and identical except for the last diagonal entry, which is greater by two in $M_-$. Moreover, modulo $p$ the matrices decompose as block sums:
either it holds that
\begin{equation}\label{lemma:case1}
M_{\pm} \equiv P \oplus (a \mp 1),
\end{equation}
or it holds that
\begin{equation}\label{lemma:case2}
M_{\pm} \equiv P \oplus \begin{pmatrix} 0 & 1 \\ 1 & a \mp 1 \end{pmatrix},
\end{equation}
for some integer $a$ and a matrix $P$.
\end{lemma}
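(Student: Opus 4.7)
The plan splits into a geometric setup followed by a linear-algebraic reduction modulo $p$. First, via the standard band construction, I would choose Seifert surfaces $\Sigma_\pm$ of $L_\pm$ that agree outside a small disk $D$, with $\Sigma_\pm \cap D$ being bands differing by two full twists at the changed crossing. Extending a basis of $H_1(\Sigma_+ \setminus D)$ by the class $[\gamma]$ of the band's core as the last element yields a basis of $H_1(\Sigma_\pm)$ in which the symmetrized Seifert matrices take the form
\[
M_\pm = \begin{pmatrix} A & v \\ v^\top & b_\pm \end{pmatrix},
\]
where $A$ and $v$ depend only on the part of the surface outside $D$ and $b_- - b_+ = 2$. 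This already establishes the first part of the lemma.

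Next, the algebraic freedom I retain is to apply a change of basis $T = \begin{pmatrix} T' & 0 \\ t^\top & 1 \end{pmatrix} \in GL_n(\Z)$ fixing the last basis vector, i.e., $T e_n = e_n$. Such a $T$ preserves the ``identical except in the last diagonal entry'' structure, since $M_- - M_+ = 2\, e_n e_n^\top$ is unchanged by conjugation, while transforming $M_\pm$ into
\[
\begin{pmatrix} T'AT'^\top & T'(At+v) \\ (At+v)^\top T'^\top & t^\top A t + 2 v^\top t + b_\pm \end{pmatrix}.
\]
The task thus reduces to choosing $T'$ and $t$ so that, modulo $p$, the upper-left block, the side column, and the last entry fit one of the two stated block-sum shapes.

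The case split rests on whether $v$ lies in $\operatorname{im}(A)$ modulo~$p$. If it does, pick $t \in \Z^{n-1}$ with $At + v \equiv 0 \pmod p$ and set $T' = I$: the side column becomes zero modulo~$p$, and one lands in Case~1 of the lemma with $P \equiv A \pmod p$ and $a - 1 \equiv t^\top A t + 2 v^\top t + b_+ \pmod p$. If it does not, then since $A$ is symmetric, $\ker(A \bmod p) = \operatorname{im}(A \bmod p)^\perp$, so there exists $k \in \ker(A \bmod p)$ with $k^\top v \not\equiv 0 \pmod p$. Lifting $k$ to $\Z^{n-1}$ and taking it as the last row of some $T' \in GL_{n-1}(\Z)$---using the surjection $SL_{n-1}(\Z) \twoheadrightarrow SL_{n-1}(\mathbb{F}_p)$ to extend to a unimodular basis---the transformed upper-left block has vanishing last row and column modulo~$p$, while the $(n{-}1,n)$ entry becomes $c := k^\top v$, a unit modulo~$p$. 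Further elementary operations---killing the first $n-2$ entries of the side column using that $c$ is a unit mod~$p$, and rescaling $c$ to $1$ via a lift of $\operatorname{diag}(c, c^{-1}) \in SL_2(\mathbb{F}_p)$ acting on coordinates $n-2$ and $n-1$---then produce Case~2 of the lemma.

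The main obstacle I anticipate is the final rescaling step, which requires at least one ``extra'' coordinate in the first $n-1$ in order to lift a diagonal $SL_2(\mathbb{F}_p)$-element to $SL_2(\Z)$. When the matrix is too small or $A$ vanishes modulo~$p$, no such coordinate is available; in such degenerate situations one must first apply an S-equivalence move to $M_\pm$ simultaneously, inserted before the last row and column so that the last-diagonal-entry structure is preserved, in order to enlarge the matrix. Once sufficient room is available, the remaining reductions are routine linear algebra over $\mathbb{F}_p$.
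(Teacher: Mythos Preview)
Your proposal is correct and follows essentially the same approach as the paper: the geometric setup yielding matrices that differ only in the last diagonal entry is identical, and the linear-algebraic reduction via unimodular transformations fixing $e_n$ achieves the same block forms, though you organize it as a direct case split on whether $v\in\operatorname{im}(A)\pmod p$ rather than the paper's sequential diagonalization of the upper-left block. Your explicit anticipation of the small-matrix edge case (and its resolution via stabilization) is in fact more careful than the paper's terse ``after a further basis change''; the only slip is terminological---a single crossing change corresponds to one full twist of the band, not two, though your conclusion $b_--b_+=2$ is correct regardless.
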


\begin{figure}[b]
\centering
\includegraphics[scale=.45]{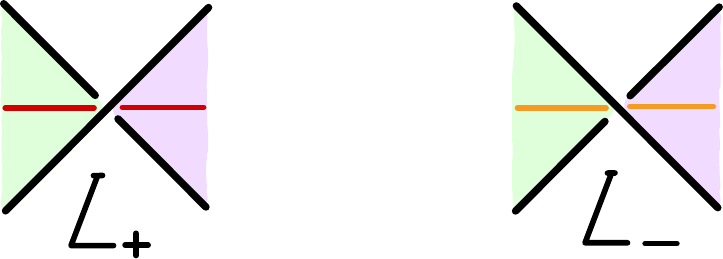}
\caption{Seifert surfaces for $L_+$ and $L_-$.}
\label{fig:seifert}
\end{figure}

\begin{proof}
By a suitable choice of Seifert surfaces and bases of their first homology groups (see \cref{fig:seifert}), one obtains symmetrized Seifert matrices $M_+$, $M_-$ for $L_+$, $L_-$ respectively of the form 
\[
M_{\pm} = \begin{pmatrix}
P_1 & v_1^{\top} \\
v_1 & a_1\mp 1
\end{pmatrix}.
\]
To see this, assume that a curve $\beta$ runs along the band as shown in \cref{fig:seifert}. Then, in the case of a positively twisted band (i.e.~twisted such that the corresponding crossing is positive), the twist decreases $lk(\beta, \beta^+)$ by $\frac{1}{2}$, while for a negative twisted band, it increases $\lk(\beta, \beta^+)$ by $\frac{1}{2}$. Therefore, the linking number $\lk(\beta, \beta^+)$ 
for $L_+$ is one smaller than that for~$L_-$. The symmetrized Seifert matrices then differ by two in the last diagonal entry.

To achieve one of the conditions modulo $p$, we make a series of basis changes.
After a first change, the matrix modulo $p$ is of the form 
\[
\begin{pmatrix}
P_2 & 0 & v_2^{\top} \\
0   & 0 & v_3^{\top} \\
v_2 & v_3 & a_1 \mp 1
\end{pmatrix}
\]
with $\det P_2 \not\equiv 0\pmod{p}$. A second change leads to 
\[
\begin{pmatrix}
P_2 & 0 & 0 \\
0   & 0 & v_3^{\top} \\
0 & v_3 & a_2 \mp 1
\end{pmatrix}.
\]
Note that $v_3$ is a potentially empty vector.
If $v_3$ is the zero vector modulo $p$, we find ourselves in case \cref{lemma:case1}, and otherwise (after a further basis change) in case \cref{lemma:case2}.
\end{proof}
We are now in the position to prove \cref{thm:signedu}, which we recall for the reader's convenience.
\signedu*
\begin{proof}
We proceed by induction on $u(L)$. If $u(L) = 0$,
then $L$ is the unlink. A symmetrized $n\times n$ Seifert matrix for $L$ is then given by the zero matrix of size $n = d_p(L) = c(L) - 1$.
One computes $\delta_p(L) = 1$. Since $u_{+} = u_{-} = 0$, this implies the desired equalities for all the four possible remainders of $p$ modulo~8.

If $u(L) \geq 1$, then let $L'$ be the link obtained by performing the first crossing change in a sequence that transforms $L$ into the unlink. Thus $u(L') = u(L) - 1$. Since a crossing change alters $d_p$ by at most $1$, it follows that $d_p(L') = d_p(L) - 1$. Thus $L'$ satisfies the induction hypothesis.

Let us first consider the case that $L' = L_-$ comes from $L = L_+$ by changing a positive crossing to a negative one.
Let us pick matrices $M_+$ and $M_-$ as in \cref{lemma:goodmatrices}.
Note that $d_p(L') = d_p(L) - 1$ implies that, over $\mathbb{F}_p$, the rank of $M_-$ is one greater than the rank of $M_+$.
In case \cref{lemma:case2}, the ranks of $M_-$ and $M_+$ over $\mathbb{F}_p$ would be equal (independently of the value of~$a$). So we must be in case \cref{lemma:case1} with $a \equiv 1$ modulo~$p$.
By the multiplicativity of the Legendre symbol, and because $d_p(L') = d_p(L) - 1$,
\[
\delta_p(L) = \delta_p(L') \cdot \legendre{-2}{p}.
\]
The analogous argument in the case that $L'$ comes from $L$ by changing a negative crossing yields
\[
\delta_p(L) = \delta_p(L') \cdot \legendre{2}{p}.
\]
Now, the result follows for each of the four possible remainders of $p$ modulo~8
by applying the induction hypothesis and~\eqref{eq:legendre-1},~\eqref{eq:legendre2}.
\end{proof}
\begin{example}
The pretzel knot $K=P(7,-7,7)$ has a Seifert matrix
$
\bigl(\begin{smallmatrix}
0	&	7\\
7	&	0
\end{smallmatrix}\bigr)
$,
and hence
$d_7 = 2$ and~$\delta_7 = 1$. It follows that if $P(7,-7,7)$ can be unknotted with two crossing changes, then they must be of the same sign.
We are not aware of any other method to establish this fact.
The best known bounds for the unknotting number $u(K)$ of $P(7,-7,7)$ are $2 \leq u(K) \leq 7$.
One possible strategy to prove $3 \leq u(K)$ would be to use a different obstruction
to show that $P(7,-7,7)$ cannot be unknotted with two crossing changes of opposite sign,
compare~\cite{zbMATH05242998,zbMATH06492012}.
\end{example}
\begin{example}
Let $K$ be a knot with Seifert matrix
\[
M = 
\begin{pmatrix}
0	&	17	&	0	&	0 \\
17	&	0	&	0	&	0 \\
0	&	0	&	6	&	3 \\
0	&	0	&	3	&	10
\end{pmatrix},
\]
such as $K = P(17,-17,17) \# P(3,3,7)$.
Then $d_{17} = 3$ and $\delta_{17} = -1$. It follows that $u(K) \geq 4$.
As in the previous example, we do not know of a different way to prove this.
\end{example}
\begin{remark}
By construction, \cref{thm:signedu} remains valid when one replaces $u(L)$ with the minimal Gordian distance between $L$ and a link $L'$ with trivial $H_1(D_{L'}; \mathbb{F}_p$) (and $u_{\pm}$ with the number of positive and negative crossing changes necessary to reach $L'$ from $L$).
In particular, for knots, the obstructions in \cref{thm:signedu} apply to the so-called algebraic unknotting number introduced by Fogel~\cite{fogel}, which is the Gordian distance to a knot with Alexander polynomial~$1$; see also~\cite{zbMATH06425399}.
\end{remark}

\section{Linking forms}\label{sec:lkform}
In this section, we relate the singular determinant of a matrix with the linking form the matrix presents, see \cref{prop:singdetwall} below.
Let us start by reviewing linking forms.
The first homology group $H_1(X; \mathbb{Z})$ of a rational homology 3-sphere $X$ can be equipped with a so-called \emph{linking form} (see e.g.~\cite{zbMATH06958506})
\[
\lambda_X\colon H_1(X; \mathbb{Z})\times H_1(X; \mathbb{Z}) \to \mathbb{Q}/\mathbb{Z}.
\]
This is a bilinear symmetric form. It is non-singular, i.e.~for all non-zero $x\in H_1(X; \mathbb{Z})$ there exists $y\in H_1(X; \mathbb{Z})$ such that $\lambda_X(x,y) \neq 0$.
The linking form $\lambda_X(x,y)$ may be defined as $1/a$ times the intersection number of $x$ with a $2$-chain $z$, such that the boundary of $z$ is $ay$ for some non-zero integer~$a$.
Abstractly, two bilinear symmetric forms $\lambda_G, \lambda_H$ on abelian groups $G$, $H$ with target $\mathbb{Q}/\mathbb{Z}$ are called \emph{isometric} if there is an isomorphism $\varphi\colon G\to H$ such that $\lambda_G(x,y) = \lambda_H(\varphi(x), \varphi(y))$ for all $x,y\in G$.
The isometry type of the linking form of a manifold $X$ as above is invariant under orientation preserving homeomorphisms.
Thus the classification of isometry types of bilinear forms as above
is important for the understanding of 3-manifolds.
There are several formulations of such a classification~\cite{zbMATH03011631,zbMATH03060534,Wall}; in this paper, we will follow Wall's~\cite{Wall}.
Our primary interest is the case that $X$ is the double branched covering of $S^3$ along a knot; in this case, $H_1(X; \mathbb{Z})$ is a finite abelian group of odd order. Thus, we will ignore the case of finite abelian groups of even order, for which the classification is somewhat more intricate.
\begin{theorem}[\cite{Wall}]
    Denote by $\mathcal{M}$ the commutative monoid of non-singular symmetric bilinear forms on finite abelian groups of odd order, considered up to isometry. The monoid $\mathcal{M}$ is generated by $A_{p^k}$ and $B_{p^k}$, where $p$ ranges over the odd primes, $k$ ranges over the positive integers, and
    \begin{align*}
        A_{p^k} &\colon \Z/p^k \times \Z/p^k \rightarrow \nicefrac{\Q}{\Z}, (1,1) \mapsto a/p^k,\\
         B_{p^k} &\colon \Z/p^k \times \Z/p^k \rightarrow \nicefrac{\Q}{\Z}, (1,1) \mapsto b/p^k,
    \end{align*}
    where $a$ and $b$ are a quadratic residue and a quadratic non-residue mod $p$, respectively.
    These generators satisfy the relations $A_{p^k} \oplus A_{p^k} \cong B_{p^k} \oplus B_{p^k}$, and no further relations.
\end{theorem}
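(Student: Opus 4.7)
The plan is to proceed in three stages. First, I would reduce to the case of forms on a $p$-group for a single odd prime $p$. Any finite abelian group of odd order decomposes uniquely as the direct sum of its $p$-primary components, and distinct primary components are automatically orthogonal with respect to any bilinear form into $\mathbb{Q}/\mathbb{Z}$ (because $\lambda(x,y)$ has order dividing $\gcd(\ord x, \ord y)$). Hence $\mathcal{M}$ decomposes as a coproduct $\bigoplus_p \mathcal{M}_p$ of sub-monoids indexed by odd primes, and it suffices to describe each $\mathcal{M}_p$.

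Second, within a single $p$-primary component $(G,\lambda)$, I would show that the form admits an orthogonal decomposition into cyclic summands. The key step is to pick an element $x \in G$ of maximal order $p^k$ such that the self-pairing $\lambda(x,x) \in \mathbb{Q}/\mathbb{Z}$ also has order exactly $p^k$. Granted such an $x$, the cyclic subgroup $\langle x \rangle$ is an orthogonal direct summand: the projection $G \to \langle x \rangle$ sending $y \mapsto \lambda(y,x)\lambda(x,x)^{-1}\cdot x$ is well-defined because $\lambda(x,x)$ generates $\tfrac{1}{p^k}\Z/\Z$, and one checks that its kernel is a $\lambda$-orthogonal complement. Iterating gives $G \cong \bigoplus_i \Z/p^{k_i}$ with the induced form on the $i$-th summand of the shape $(1,1)\mapsto c_i/p^{k_i}$ with $c_i$ coprime to $p$; by $(\Z/p^{k_i})^\times$-rescaling of the generator, $c_i$ is well-defined modulo squares, and the summand is isometric to $A_{p^{k_i}}$ or $B_{p^{k_i}}$ depending on whether $c_i$ is a residue or non-residue mod $p$. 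The main obstacle is the existence of the element $x$ of maximal self-pairing order: one argues that if every maximal-order element $y$ had $\lambda(y,y)$ of order smaller than $\ord y$, then the non-singularity of $\lambda$ would be violated (using that an odd prime $p$ makes $2$ invertible, so that the quadratic refinement $y \mapsto \lambda(y,y)$ controls $\lambda$).

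Third, I would verify the single relation and establish uniqueness. For the relation $A_{p^k} \oplus A_{p^k} \cong B_{p^k} \oplus B_{p^k}$, it suffices to exhibit $T \in \operatorname{GL}_2(\Z/p^k)$ with $T\operatorname{diag}(a,a)T^\top \equiv \operatorname{diag}(b,b)$; this reduces, after dividing by $a$, to writing $b/a$ and $1$ as values of a binary quadratic form of determinant $(b/a)^2$, which is possible since every unit of $\Z/p^k$ is a sum of two squares when $p$ is odd (use Chevalley--Warning, or a direct counting argument). To see no further relations hold, I would introduce, for each pair $(p,k)$, the invariant $r_{p,k} \in \Z/2$ equal to the number of $B_{p^k}$ summands modulo $2$ in any decomposition; together with the isomorphism type of the underlying abelian group (which records all the $k_i$'s), these invariants separate the isometry classes. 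The relation $A \oplus A \sim B \oplus B$ changes the count of $B_{p^k}$'s by $\pm 2$ and so preserves $r_{p,k}$; conversely, two decompositions with the same underlying group and the same $r_{p,k}$'s can be transformed into each other by repeated application of this relation, so no additional relations are needed.
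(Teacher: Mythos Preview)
The paper does not prove this theorem; it is quoted from Wall~\cite{Wall} and used as a black box, so there is no in-paper argument to compare against.

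Your outline is the standard route and the first two stages are sound: the $p$-primary splitting is automatic, and your diagonalisation argument (pick $x$ of maximal order, use non-singularity to find $y$ with $\lambda(x,y)$ of order $p^k$, and note that one of $x$, $y$, $x+y$ has self-pairing of order $p^k$ because $2$ is a unit) goes through. The construction of the isometry $A_{p^k}\oplus A_{p^k}\cong B_{p^k}\oplus B_{p^k}$ via writing any unit of $\Z/p^k$ as a sum of two squares is also fine.

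There is, however, a genuine gap in your ``no further relations'' step. You define $r_{p,k}$ as the number of $B_{p^k}$ summands modulo~$2$ \emph{in a chosen orthogonal decomposition}, and then use it to separate isometry classes. But the whole point is that decompositions are not unique (the relation itself witnesses this), so you must show $r_{p,k}$ is independent of the decomposition before you can invoke it as an invariant. Your argument as written is circular: you check that the one relation preserves $r_{p,k}$, but an arbitrary isometry between two decompositions need not factor through that relation a priori---that is exactly the statement you are trying to prove. To close the gap you need either an intrinsic definition of $r_{p,k}$ (for instance, as the Legendre symbol of a discriminant computed on the canonical subquotient $p^{k-1}G[p^k]\big/\bigl(p^{k}G[p^{k+1}]+p^{k-1}G[p^{k-1}]\bigr)$, which inherits a non-degenerate $\mathbb{F}_p$-valued form), or a Witt cancellation theorem for these forms, from which uniqueness then follows by cancelling common cyclic summands.
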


The theorem immediately implies that the homomorphism $r_{p,k} \colon \mathcal{M} \rightarrow \Z/2$, defined as the number of $A_{p^k}\pmod 2$, is well-defined. In fact, the $r_{p,k}$ taken together yield an isomorphism between $\mathcal{M}$ and $(\mathbb{Z}/2)^{\oplus \infty}$.

Let us now discuss presentation matrices (see e.g.~\cite{zbMATH06958506}). %
Every symmetric matrix $M \in \mathbb{Z}^{n\times n}$ with $\det(M)\neq 0$
gives rise to a non-singular symmetric bilinear form,
\begin{align*}
\lambda_M\colon \coker M \times \coker M & \to \mathbb{Q}/\mathbb{Z}, \\
\lambda_M([x],[y]) & = x^{\top} M^{-1} y.
\end{align*}
We call $\lambda_M$ the \emph{linking form}\footnote{Note that linking forms are defined as bilinear mappings to $\mathbb{Q}/\mathbb{Z}$, and not to $\mathbb{Z}$, as otherwise these would all be constant zero mappings. Indeed, for $x,y \in \coker M$ exists a positive integer $a$ s.t. $ax = 0$ (elements of $\coker M$ are torsion elements). By bilinearity, $0 = \lambda_M(ax,y) = a \lambda_M(x,y)$. Since $a \neq 0, \lambda_M \equiv 0$.} of~$M$,
and say that $M$ is a \emph{presentation matrix} for~$\lambda_M$.
If $T \in \mathbb{Z}^{n\times n}$ with $\det(T)=\pm 1$,
then the matrix $TMT^{\top}$ (which is obtained from $M$ by a basis change) presents a linking form $\lambda_{TMT^{\top}}$ that is isometric to~$\lambda_M$.
Presentation matrices occur rather naturally:
if $Y$ is a compact oriented connected 4-manifold whose boundary $\partial Y = X$ is a rational homology 3-sphere, then a matrix for the intersection pairing of $Y$ presents the linking form on~$X$. For knots, a symmetrized Seifert matrix presents the linking form of the double branched covering.

For the proof of \cref{prop:singdetwall}, we will need the following three lemmas.

\begin{lemma} [\textbf{General Jacobi Identity}, {\cite[Lemma~A.1~(e)]{zbMATH06234489}}]
    \label{Lemma-Jacobi}
 Let $M \in \R^{n \times n}$ be a symmetric matrix, and $I, J\subset \{1,\ldots,n\}$ subsets of indices such that $|I| = |J|$. Denote by $M[I;J]$ the matrix restricted to the rows in $I$ and columns in $J$, in the original order. Then, we have
\[
        \det(M[I;J]) = (-1)^{\sum_{i \in I}i + \sum_{j \in J} j} \det(M) \det(M^{-1}[I^{c}; J^{c}]).
\]
\end{lemma}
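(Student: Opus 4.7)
The plan is to prove this as the classical Jacobi identity for complementary minors, proceeding in three steps: reducing to the case of aligned index sets via permutations, applying the Schur complement formulas to a block decomposition, and using a density argument to cover the case of a singular upper-left block. The symmetry of $M$ will enter only at one point, allowing us to identify $\det M^{-1}[I^c;J^c]$ with $\det M^{-1}[J^c;I^c]$.

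For the reduction, I would write $I = \{i_1 < \cdots < i_k\}$ and let $\sigma_I$ be the permutation of $\{1,\ldots,n\}$ that moves the indices in $I$ to the first $k$ positions while preserving the relative orders on $I$ and on $I^c$; $\sigma_J$ is defined analogously. A standard inversion count gives $\operatorname{sgn}(\sigma_I) = (-1)^{\sum_{i\in I}i - k(k+1)/2}$ and the symmetric formula for $\sigma_J$. Conjugating $M$ by the corresponding permutation matrices produces a matrix $\widetilde M$ with $\det\widetilde M = (-1)^{\sum_I i + \sum_J j}\det M$ (since $k(k+1)$ is even), whose upper-left $k\times k$ block equals $M[I;J]$ and whose inverse has lower-right $(n-k)\times(n-k)$ block $M^{-1}[J^c;I^c]$. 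Using that $M^{-1}$ is symmetric, this last block has the same determinant as $M^{-1}[I^c;J^c]$, so it suffices to prove the aligned case $I=J=\{1,\ldots,k\}$ without the sign factor. Writing
\[
M = \begin{pmatrix} A & B \\ C & D \end{pmatrix}, \qquad M^{-1} = \begin{pmatrix} * & * \\ * & H \end{pmatrix}
\]
with $A = M[I;J]$, the block-inverse formula identifies $H$ with $(D - CA^{-1}B)^{-1}$, while the Schur determinant formula gives $\det M = \det A \cdot \det(D - CA^{-1}B)$. Multiplying these two relations yields $\det A = \det M \cdot \det H$, the aligned form of the identity.

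To remove the assumption that $A$ is invertible, I would use $M^{-1} = \operatorname{adj}(M)/\det M$ to clear denominators; the resulting polynomial identity in the entries of $M$ holds on the Zariski-dense open subset $\{\det A\neq 0\}\cap\{\det M\neq 0\}$, hence everywhere that $\det M\neq 0$. The main obstacle I anticipate is the sign bookkeeping in the first step: one must check that the combined contribution $-k(k+1)$ from the two inversion counts has even parity, and that the two permutations preserve the orderings inherited from $I$ and $J$ so that the block identifications are literally the submatrices claimed. The block-matrix computation itself and the density extension are both routine.
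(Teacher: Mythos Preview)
The paper does not supply its own proof of this lemma: it is quoted verbatim from another source (the cited Lemma~A.1(e)) and used as a black box in the proof of \cref{prop:singdetwall}. There is thus nothing to compare against.

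Your argument is the standard one and is correct. A couple of small remarks on presentation: the word ``conjugating'' is slightly misleading since the row and column permutations $\sigma_I$ and $\sigma_J$ differ, but your actual computation (with $\widetilde M = P_I M P_J^{\top}$, $\widetilde M^{-1} = P_J M^{-1} P_I^{\top}$, hence bottom-right block $M^{-1}[J^c;I^c]$) is right, and you correctly invoke the symmetry of $M^{-1}$ to swap $I^c$ and $J^c$ to match the form stated in the lemma. The sign bookkeeping, the Schur/block-inverse step, and the density extension are all fine as stated.
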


In what follows, for $p$ a prime we write $\ord_p\colon \mathbb{Q} \to \mathbb{Z}\cup\{\infty\}$ for the $p$-adic valuation, i.e.~for $z\in\mathbb{Q}, $$\ord_p(z)$ is $\infty$ for $z = 0$ and chosen so that $p^{-\ord_p(z)} z$ has numerator and denominator coprime with $p$ for $z\neq 0$.
By convention, $\infty > a$ for all $a\in\mathbb{Z}$.
\begin{lemma}\label{lem:rationalchange}
For all symmetric matrices $N \in \mathbb{Q}^{m\times m}$ and integers $\rho$, there exists
$S \in \mathbb{Z}^{m\times m}$ with $\det S = \pm 1$ such that 
$N' = SNS^{\top}$ satisfies $\ord_p N'_{ii} \leq \ord_p N'_{jj}$ for all $i\geq j$,
and $\ord_p N'_{ii} < \ord_p N'_{ij}$ for all $i\neq j$.
Moreover, $\rho \leq \ord_p N'_{ij}$ for all $i\neq j$.
\end{lemma}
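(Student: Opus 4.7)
My plan is to proceed by induction on $m$, the base cases $m \leq 1$ being trivial. For the inductive step, assume $N \neq 0$ and set $\nu := \min_{i,j} \ord_p N_{ij} \in \Z$. The first task is to produce a unimodular integer matrix $S_1$ so that $\ord_p (S_1 N S_1^{\top})_{mm} = \nu$. If some diagonal entry $N_{ii}$ attains the minimum, a permutation swapping $i$ and $m$ suffices. Otherwise, some off-diagonal entry $N_{ij}$ has valuation $\nu$ while every diagonal entry has valuation $> \nu$; after permuting indices so that $(i,j) = (m-1, m)$, I would apply $S_1 = I + E_{m,m-1}$ (the elementary matrix with a single nonzero off-diagonal entry). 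This changes the $(m,m)$-entry to $N_{mm} + 2 N_{m-1,m} + N_{m-1,m-1}$; since $p$ is odd (as throughout the paper), the middle term has valuation exactly $\nu$, strictly less than the other two, so the new $(m,m)$-entry has valuation $\nu$ as required. This is the only place where the odd-$p$ hypothesis enters.

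Next, I would use the pivot at $(m,m)$ to clear the last row and column. Setting $N^{(1)} := S_1 N S_1^{\top}$, the exact (rational) pivot choices $\tilde c_k := -N^{(1)}_{km}/N^{(1)}_{mm}$ lie in $\Z_p$ (since $\ord_p N^{(1)}_{km} \geq \nu$) and would produce the Schur complement
\[
\tilde N_{k\ell} := N^{(1)}_{k\ell} - \frac{N^{(1)}_{km} N^{(1)}_{\ell m}}{N^{(1)}_{mm}}, \qquad k, \ell < m,
\]
a symmetric rational matrix with $\ord_p \tilde N_{k\ell} \geq \nu$. I would then apply the induction hypothesis to $\tilde N$ with the same $\rho$, obtaining $\tilde S \in \Z^{(m-1)\times(m-1)}$ with $\det \tilde S = \pm 1$ such that $\tilde N' := \tilde S \tilde N \tilde S^{\top}$ satisfies the three conclusions of the lemma. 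Let $M$ be any integer strictly greater than both $\rho$ and each of the $p$-adic valuations of the nonzero diagonal entries of $\tilde N'$.

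Finally, I would approximate the $\tilde c_k$ by integers. For a large integer $R$ (to be specified), pick $c_k \in \Z$ with $c_k \equiv \tilde c_k \pmod{p^{R-\nu}}$, using that $\Z$ is dense in $\Z_p$. Form $S_2 := I + \sum_{k<m} c_k E_{km}$ and set $S := (\tilde S \oplus 1) \cdot S_2 \cdot S_1$. A direct calculation with $\varepsilon_k := c_k - \tilde c_k$ (each of valuation $\geq R - \nu$) shows that $N' := SNS^{\top}$ has $(m,m)$-entry $N^{(1)}_{mm}$ of valuation $\nu$, entries in its last row and column of valuation $\geq R$, and top-left $(m-1)\times(m-1)$ block equal to $\tilde N' + \tilde S E \tilde S^{\top}$, where $E_{k\ell} = \varepsilon_k \varepsilon_\ell N^{(1)}_{mm}$ has valuation $\geq 2R - \nu$. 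Choosing $R$ so that $R > M$ and $2R - \nu > M$ makes all error terms negligible in valuation, and the three conclusions of the lemma then follow by a routine verification (possibly after a final permutation within the top-left block to handle degenerate indices where $\tilde N'_{kk}$ vanishes).

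The main obstacle is the apparent circularity in the choice of $R$: one needs $R$ to exceed the diagonal valuations output by the induction, yet those arise from matrices already obtained after approximating with precision $R$. The resolution is structural: apply the induction hypothesis to the \emph{exact} Schur complement $\tilde N$ first, thereby fixing the bound $M$ before introducing any integer approximation, and only then choose $R$ large enough to render the approximation errors smaller than $M$ in $p$-adic norm.
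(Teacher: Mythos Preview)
Your argument is essentially the paper's: the same induction on $m$, the same placement of a minimal-valuation pivot at $(m,m)$ via the odd-$p$ trick, the same clearing of the last row and column by adding integer multiples of the $m$-th basis vector, and the same recursion on the top-left block. The one refinement is that you apply the induction hypothesis to the \emph{exact} rational Schur complement $\tilde N$ first and only afterward choose the integer precision $R$; the paper instead clears with integer coefficients immediately (requiring only $\ord_p > \omega$ and $\ord_p \geq \rho$ for the cleared entries) and recurses on the resulting block, leaving implicit why the cleared entries $N'_{km}$ end up with $\ord_p$ strictly exceeding the diagonal valuations $\ord_p N'_{kk}$ that the recursion produces. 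Your ordering resolves this circularity cleanly, and your computation of the error term $E_{k\ell} = \varepsilon_k \varepsilon_\ell N^{(1)}_{mm}$ is correct (the cross terms cancel exactly because $\tilde c_\ell N^{(1)}_{mm} = -N^{(1)}_{\ell m}$).

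One small correction: your final parenthetical about permuting to handle indices with $\tilde N'_{kk} = 0$ is neither needed nor effective. If the induction hypothesis holds for $\tilde N$ with $m-1 \geq 2$, the strict inequality $\ord_p \tilde N'_{kk} < \ord_p \tilde N'_{kj}$ already forces every $\tilde N'_{kk} \neq 0$; and a permutation cannot turn a zero diagonal entry into a nonzero one. The only genuine degeneracy is when $\tilde N$ has a zero row, in which case the lemma as literally stated actually fails---but this does not arise in the paper's sole application, where $N = M^{-1}$ is invertible and hence so is every Schur complement in the recursion.
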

\begin{proof}
We will construct $S$ as a product of transformation matrices
that correspond to elementary basis changes (i.e.~permuting basis vectors, and adding integer multiples of one basis vector to another).
For better readability, we just describe the basis changes, and do not explicitly write the corresponding transformation matrices.
Let us proceed by induction over~$m$.  Denote by $\omega$ the minimal $\ord_p$ of all entries in $N$.

As first case, assume there is a diagonal entry of $N$ with $\ord_p = \omega$. After a basis change (permuting basis vectors), this entry is at $(m,m)$.
After a further basis change (adding integer multiples of the $m$-th basis vector to the others), all entries $(m,j)$ with $j \neq m$ have $\ord_p > \omega$
and $\ord_p \geq \rho$.
The submatrix of $N$ obtained by deleting the last row and column is symmetric.
By induction, there exists a basis change of the first $m-1$ basis vectors, which we compose with the previous basis changes to obtain~$S$. 
This concludes the first case.

As second case, assume $\ord_p > \omega$ for all diagonal entries of $N$, and $\ord_p = \omega$ for an off-diagonal entry at $(i,j)$. After a basis change (adding the $i$-th basis vector to the $j$-th), we have $\ord_p = \omega$ for the $(j,j)$-entry, and find ourselves back in the first case. Here, we use that $p \neq 2$.
This concludes the proof.
\end{proof}
\begin{lemma}\label{Lemma-matrixtransformation2}
Let $M$ be a symmetric $n\times n$ integer matrix with $\det(M)\neq 0$ and $p$ be an odd prime number.
Denote by $0 \leq k_1 \leq \dots \leq k_n$ the unique integers such that
$\coker M \cong \Z/p^{k_1} \oplus \dots \oplus \Z/p^{k_n} \oplus \Z/q$ with $q$ not divisible by~$p$.
Then there exists an $n\times n$ integer matrix $T$ with $\det T = \pm 1$ such that the $(i,j)$-entry of the rational matrix $(TMT^{\top})^{-1}$ has
$\ord_p \geq 0$ for $i\neq j$ and $\ord_p = -k_i$ for $i = j$.
Moreover, the $(i,j)$-entry of the integer matrix $TMT^{\top}$ has
$\ord_p \geq k_i + k_j$ for $i\neq j$ and
$\ord_p \geq k_i$ for $i = j$.
\end{lemma}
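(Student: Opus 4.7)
The plan is to apply \cref{lem:rationalchange} to $M$ itself with a large value of $\rho$, and then deduce both assertions via $p$-adic perturbation arguments. First, I would set $\rho := 1 + 2\sum_i k_i$ (observing that $\sum_i k_i = \ord_p \det M$ is readable off from $M$), apply \cref{lem:rationalchange} to $M$, and compose the resulting basis change with the permutation reversing the order of the basis vectors. This produces $T \in \Z^{n\times n}$ with $\det T = \pm 1$ such that $N := TMT^{\top}$ has diagonal $p$-adic valuations $\alpha_1 \le \alpha_2 \le \cdots \le \alpha_n$ and off-diagonal entries all satisfying $\ord_p N_{ij} \ge \rho$.

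The key intermediate step is to identify each $\alpha_i$ with $k_i$. Since $T \in GL_n(\Z)$ the cokernel is preserved, so the Smith normal form of $N$ over $\Z_p$ is $\operatorname{diag}(p^{k_1},\ldots,p^{k_n})$, and the GCD of the $k\times k$ minors of $N$, viewed in $\Z_p$, equals $p^{k_1+\cdots+k_k}$. On the other hand, a direct Leibniz expansion of the upper-left $k\times k$ principal minor of $N$ shows that the identity permutation contributes with $\ord_p = \alpha_1 + \cdots + \alpha_k \le \ord_p\det M < \rho$, whereas any non-identity permutation involves at least two off-diagonal factors and contributes with $\ord_p \ge 2\rho$, exceeding $\alpha_1+\cdots+\alpha_k$ by the choice of $\rho$; an analogous analysis shows that every other $k\times k$ minor (principal or not) has $\ord_p \ge \alpha_1+\cdots+\alpha_k$. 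Comparing with the determinantal divisors gives $\sum_{i\le k}\alpha_i = \sum_{i\le k} k_i$ for every $k$, hence $\alpha_i = k_i$.

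The stated bounds on the entries of $TMT^\top$ are then immediate: $\ord_p N_{ii} = k_i$ and $\ord_p N_{ij} \ge \rho \ge k_i + k_j$ for $i \neq j$. To analyse the inverse, I would decompose $N = D + R$ with $D = \operatorname{diag}(N_{11},\ldots,N_{nn})$ and expand via the $p$-adic Neumann series
\[
N^{-1} \;=\; \sum_{\ell \ge 0} (-D^{-1}R)^\ell D^{-1},
\]
whose convergence is ensured by $\ord_p (D^{-1}R)_{ij} \ge \rho - k_n > 0$. The term $\ell=0$ contributes a diagonal matrix whose $(i,i)$-entry has $\ord_p = -k_i$, while every higher-order term contributes to the $(i,j)$-entry with $\ord_p \ge \rho - k_i - k_j > 0$ by choice of $\rho$. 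This yields $\ord_p(N^{-1})_{ii} = -k_i$ and $\ord_p(N^{-1})_{ij} \ge 0$ for $i \neq j$, completing the proof.

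I expect the delicate step to be the identification $\alpha_i = k_i$: it requires choosing $\rho$ large enough that identity-permutation terms dominate in the $\ord_p$-expansion of every principal minor, combined with a precise invocation of the determinantal-divisor characterization of Smith normal form over $\Z_p$. The inversion via Neumann series is then essentially routine once $\rho$ has been chosen comfortably larger than all relevant $k_i + k_j$.
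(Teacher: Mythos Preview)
Your proof is correct, but it follows a route that is genuinely different from the paper's. The paper applies \cref{lem:rationalchange} to $N = M^{-1}$ with $\rho = 0$, obtaining the desired form for $(TMT^\top)^{-1}$ directly; it then passes to $TMT^\top$ via the adjugate (showing that the $(i,j)$-entry has $\ord_p \ge g_i + g_j$), and identifies the diagonal exponents $g_i$ with the $k_i$ by observing that $TMT^\top \cdot \operatorname{diag}(p^{-g_i})$ is an integer matrix of determinant coprime to $p$, hence has the same $p$-primary cokernel as $\operatorname{diag}(p^{g_i})$. You instead apply \cref{lem:rationalchange} to $M$ itself with a deliberately large $\rho$, identify the diagonal exponents via determinantal divisors and the Smith normal form, and then recover the inverse via a $p$-adic Neumann series.

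Both approaches work. The paper's is more economical (the choice $\rho = 0$ suffices, and the adjugate step is short), while yours yields sharper control on $TMT^\top$ (off-diagonal entries have $\ord_p \ge \rho$, far exceeding the required $k_i + k_j$) and avoids working with $M^{-1}$ until the very end. One small point worth tightening: the inequality $\alpha_1 + \cdots + \alpha_k \le \ord_p \det M$ is not immediate; you should first run your Leibniz argument for $k = n$ (combined with $\ord_p \det N = \ord_p \det M < 2\rho$) to conclude $\sum_i \alpha_i = \sum_i k_i$, and only then bound partial sums. Alternatively, the stronger property $\ord_p N_{ii} < \ord_p N_{ij}$ from \cref{lem:rationalchange} already forces the identity term to be the unique $\ord_p$-minimum in every principal minor, independently of~$\rho$.
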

Before the proof,
let us make an explicit example.
\begin{example}
Let $M$ be a symmetrized Seifert matrix of the knot 12n553.
\[
\small
M = \begin{pmatrix} 
-2 &  0 & -1 &  0\\
 0 & -6 &  9 &  3\\
-1 &  9 & -8 & -3\\
 0 &  3 & -3 &  0
\end{pmatrix},\ 
M^{-1} = \frac19 \begin{pmatrix} 
-4 & -1 &-1&  1 \\
-1 &  2 & 2&  1 \\
-1 &  2 & 2& -2 \\
 1 &  1 &-2&  8
\end{pmatrix}.
\]
One may calculate that
$\coker M \cong \Z/3 \oplus \Z/3 \oplus \Z/9$, so for $p = 3$ we have $k_1 = 0, {k_2 = k_3 = 1}$, ${k_4 = 2}$ and $q = 1$. Following the proof of the lemma, one may construct $T$ with $\det T = \pm 1$ such that
\[
\small
TMT^{\top} =
\begin{pmatrix} 
 -2 &   0 &  -3 &   0 \\
  0 &  -6 &   9 &  27 \\
 -3 &   9 & -12 & -27 \\
  0 &  27 & -27 & -90 
\end{pmatrix},\  
(TMT^{\top})^{-1} = 
\begin{pmatrix} 
  10  &   3 &   -7  &   3 \\
   3  & 4/3 &   -2  &   1 \\
  -7  &  -2 & 14/3  &  -2 \\
   3  &   1 &   -2  & 8/9
\end{pmatrix}
\]
satisfy the conditions on $\ord_3$ claimed in the lemma.
\end{example}
\begin{proof}[Proof of \cref{Lemma-matrixtransformation2}]
\cref{lem:rationalchange} applied to $N = M^{-1}$ with $\rho = 0$
yields a transformation matrix $S$.
Set $T = (S^{-1})^{\top}$, so that $(TMT^{\top})^{-1} = SM^{-1}S^{\top}$.
It follows that for the matrix $(TMT^{\top})^{-1}$,
the off-diagonal entries have $\ord_p \geq \rho = 0$.
Let $g_i$ denote ${-\ord_p}$ of the $i$-th diagonal entry.
We have to prove that $g_i = k_i$.

Let $\alpha = \ord_p (\det M)$.
On the one hand, we have $\alpha = k_1 +\dots +k_n$.
On the other hand, in the Leibniz formula for $\det (TMT^{\top})^{-1}$,
among the $n!$ summands the only one with minimal $\ord_p$ is
the product of the diagonal entries.
It follows that
$-\ord_p \det (TMT^{\top})^{-1} = \alpha = g_1 + \dots + g_n$.

Now let us use that $TMT^{\top}$ equals $(\det M)^{-1}$ times the adjugate of $(TMT^{\top})^{-1}$.
For $i\neq j$, the Leibniz determinant formula reveals that
the $(i,j)$-entry of the adjugate has $\ord_p \geq -\alpha + g_i + g_j$, and so the $(i,j)$-entry of $TMT^{\top}$ has $\ord_p \geq g_i + g_j$.
Similarly, the $i$-th diagonal entry of the adjugate has $\ord_p \geq -\alpha + g_i$, and so the $i$-th diagonal entry of $TMT^{\top}$ has $\ord_p \geq g_i$.
Let $R$ be the diagonal matrix with $i$-th diagonal entry $p^{g_i}$.
We have $\coker R \cong \Z/p^{g_1} \oplus \dots \oplus \Z/p^{g_n}$.
Note that $TMT^{\top}R^{-1}$ is an integer matrix with determinant $q$, i.e.~with determinant coprime with $p$.
It follows that the $p$-primary parts of the cokernels of the matrices $R$ and $(TMT^{\top}R^{-1})R$ are isomorphic. But the latter matrix equals $TMT^{\top}$, which has cokernel isomorphic to $\Z/p^{k_1} \oplus \dots \oplus \Z/p^{k_n}$.
Thus $g_i = k_i$ for all~$i$.
This concludes the proof.
\end{proof}

\begin{proposition}\label{prop:singdetwall}
Let $M \in \mathbb{Z}^{2g\times 2g}$ be a symmetric matrix with even diagonal entries and odd determinant.
Let $p$ be an odd prime.
Write $|\det(M)| = p^{\alpha} q$ with $\alpha\geq 0$ and $q$ not divisible by~$p$.
Let $m$ be the dimension of the kernel of $M$ over~$\mathbb{F}_p$.
Then
\[
\delta_p(M) = \legendre{q}{p} \cdot (-1)^{\sum_{k=1}^\infty r_{p,k}(\lambda_M)} \cdot (-1)^{\frac{p-1}{2}\bigl(\alpha+m+\frac{q-1}{2}\bigr)}.
\]
\end{proposition}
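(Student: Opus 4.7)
The plan is to reduce the computation of $\delta_p(M)$ to a Legendre symbol attached to a specific block of $(M')^{-1}$ for a suitable congruent matrix $M' = TMT^{\top}$, and then to identify this Legendre symbol with the Wall invariants of $\lambda_M$. First I apply \cref{Lemma-matrixtransformation2} to obtain $T$ with $\det T = \pm 1$ such that, modulo $p$, the matrix $M' = TMT^{\top}$ decomposes as $N \oplus 0$, where $N$ is the upper-left $(n-m) \times (n-m)$ block and $p \nmid \det N$. Because $M$ has odd determinant, the kernel of $M$ over $\mathbb{F}_2$ is trivial, so $\mu = 1$ and $(n+\mu-1)/2 = g$, giving $\delta_p(M) = \legendres{(-1)^{m+g}\det N}{p}$.

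Next I evaluate $\det N$ modulo $p$. Applying the General Jacobi Identity (\cref{Lemma-Jacobi}) with $I = J = \{1, \ldots, n-m\}$ yields $\det N = \det M \cdot \det D$, where $D$ is the lower-right $m \times m$ block of $(M')^{-1}$. By \cref{Lemma-matrixtransformation2}, the diagonal entries of $D$ have $\ord_p$ equal to $-k_{n-m+1}, \ldots, -k_n$ (summing to $-\alpha$), while the off-diagonal entries have non-negative $\ord_p$. The Leibniz expansion of $\det D$ then shows that only the identity permutation achieves $\ord_p = -\alpha$: any other permutation contains a cycle of length $\geq 2$ through indices with $k_i \geq 1$, boosting $\ord_p$ by at least~$2$. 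Hence $p^{\alpha} \det D \equiv \prod_{i=n-m+1}^n p^{k_i} D_{ii} \pmod{p}$, and writing $\det M = \epsilon p^{\alpha} q$ with $\epsilon \in \{\pm 1\}$, this gives $\det N \equiv \epsilon q \cdot \prod p^{k_i} D_{ii} \pmod{p}$.

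To identify the Legendre symbol of this product with Wall invariants, I diagonalize $p$-adically. Since $p$ is odd, $M'$ is congruent over $\mathbb{Z}_{(p)}$ to $\mathrm{diag}(u_1 p^{k_1}, \ldots, u_n p^{k_n})$ with $u_i \in \mathbb{Z}_{(p)}^{\times}$, and the $p$-primary part of $\lambda_M$ decomposes orthogonally into summands $\mathbb{Z}/p^{k_i}$ of Wall-type $A_{p^{k_i}}$ or $B_{p^{k_i}}$ according to $\legendres{u_i}{p}$. A direct comparison yields $\prod_{k_i \geq 1} \legendres{p^{k_i} D_{ii}}{p} = \prod_{k_i \geq 1} \legendres{u_i}{p}$, which, combined with the identity $\prod_i \legendres{u_i}{p} = \legendres{\epsilon q}{p}$ and the relation between $r_{p,k}(\lambda_M)$ and the parity counts of non-QR units among the $u_i$, produces the expected $\legendres{q}{p} \cdot (-1)^{\sum_k r_{p,k}(\lambda_M)}$ together with an explicit residual sign.

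The main obstacle is the careful assembly of this residual sign. After collecting the $\legendres{-1}{p}^{m+g}$ from the definition of $\delta_p$, the $\legendres{\epsilon}{p}$ coming from the sign of $\det M$, and the parity contributions from the Wall-invariant bookkeeping, everything must be shown to equal $(-1)^{(p-1)/2(\alpha + m + (q-1)/2)}$. Using $\legendres{-1}{p} = (-1)^{(p-1)/2}$ reduces this to a parity identity, whose key input is the classical congruence $\det M \equiv (-1)^g \pmod 4$, valid for every $2g \times 2g$ symmetric integer matrix with even diagonal and odd determinant. This congruence pins down $\epsilon$ modulo $4$ in terms of the parities of $g$, $\alpha$ and $(q-1)/2$, collapsing the various signs into the stated exponent and completing the proof.
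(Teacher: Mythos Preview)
Your proof follows essentially the same route as the paper's: apply \cref{Lemma-matrixtransformation2} to obtain $M' = TMT^{\top}$, read off $\delta_p(M) = \legendres{(-1)^{m+g}\det N}{p}$, use the General Jacobi Identity to write $\det N = \det M \cdot \det D$, and finish with the congruence $\det M \equiv (-1)^g \pmod 4$ to assemble the signs. The paper does exactly this.

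The one point where you diverge is the identification of $\prod_{k_i\geq 1}\legendres{p^{k_i}D_{ii}}{p}$ with the Wall invariants. The paper does \emph{not} introduce a separate $p$-adic diagonalization; instead it observes that the scaled standard basis vectors $f_i = u_i e_i$ (with $u_i$ a suitable integer coprime to~$p$) already form an orthogonal basis of the $p$-primary part of $\coker M'$ with $\lambda_{M'}(f_i,f_i) = u_i^2 D_{ii}$, so the Wall type of the $i$-th summand is literally $\legendres{p^{k_i}D_{ii}}{p}$. This makes the link to $\det N'$ immediate via the Leibniz expansion. Your ``direct comparison'' between the $D_{ii}$ and the units $u_i$ of an auxiliary diagonalization over $\mathbb{Z}_{(p)}$ is asserted but not argued; it does hold, e.g.\ because reducing the $\mathbb{Z}_{(p)}$-congruence $M'\sim\mathrm{diag}(u_ip^{k_i})$ modulo~$p$ shows $N$ is $\mathbb{F}_p$-congruent to $\mathrm{diag}(u_i)_{k_i=0}$, whence $\legendres{\det N}{p}=\prod_{k_i=0}\legendres{u_i}{p}$, and combining with $\prod_i\legendres{u_i}{p}=\legendres{\epsilon q}{p}$ closes the loop. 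Note that this argument actually bypasses the Jacobi--Leibniz computation of $\det N$ in terms of the $D_{ii}$ entirely, so your detour through a second diagonalization makes that part of your write-up redundant. The paper's route via the $f_i$ is more economical: one transformation, one basis, and the $D_{ii}$ do double duty.
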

\begin{proof}
Let $0 \leq k_1 \leq k_2 \leq \dots \leq k_{2g}$ 
be the uniquely determined integers such that
\[
\coker M \cong \Z/p^{k_1} \oplus\dots\oplus \Z/p^{k_{2g}} \oplus \Z/q.
\]
Then $m$ equals the number of $k_i$ that are non-zero, i.e.\ 
$k_{2g - m} = 0$ and $k_{2g - m + 1} \geq 1$.
Let $s_{ij}, t_{ij}$ be coprime integers such that
the $(i,j)$-entry of $M^{-1}$ equals $s_{ij}/t_{ij}$.
By \cref{{Lemma-matrixtransformation2}}, we can w.l.o.g.\ assume that
$\ord_p (t_{ij})$
equals $0$ if $i\neq j$ 
and equals $k_i$ if~$i = j$.
Consider the submatrices
\begin{align*}
N & = M[1, \dots, 2g-m; 1, \dots, 2g-m], \\
N' & = M^{-1}[2g-m+1, \dots, 2g; 2g-m+1, \dots, 2g].
\end{align*}
By \cref{Lemma-Jacobi} (the General Jacobi Identity), we have
\begin{equation}\label{eq:jacobi}
\det(N) = \det(M)\cdot \det(N').
\end{equation}
Note that $\ord_p (\det(M)) = \alpha = - \ord_p(\det(N'))$.
It follows that $\det(N) \in \mathbb{Z}$ is not divisible by $p$.
Moreover, by \cref{{Lemma-matrixtransformation2}}, every entry of $M$ outside of $N$ is divisible by $p$.
So, $N$ corresponds to the submatrix in the definition of $\delta_p$
and thus
\begin{equation}\label{eq:deltap}
\delta_p(M) = \left(\frac{(-1)^{m + g}\cdot\det N}{p}\right).
\end{equation}
Let us now have a look at $\det(M)$.
Of course, $|\det(M)| = p^{\alpha} q$.
Moreover, the assumptions that $M$ has even diagonal entries and odd determinant implies that modulo 2, $M$ is a block sum of $g$ matrices of the form
\[
\begin{pmatrix} 0 & 1 \\ 1 & 0 \end{pmatrix}.
\]
Thus $\det(M) \equiv (-1)^g \pmod{4}$. From this it follows that
\begin{equation}\label{eq:detM}
\det(M) = (-1)^{\frac{\det M -1}{2} + g} p^{\alpha} q.
\end{equation}

Next, let us relate $\lambda_M$ to $\det(N')$.
For all $i\in\{2g-m+1,\ldots,2g\}$, we define
\[
u_i = p^{-k_i}\cdot\prod_{j=1}^{2g} t_{ij}.
\]
We have $\ord_p(u_i) = \ord_p(p^{-k_i}) + \sum_{j=1}^{2g} \ord_p(t_{ij}) = 0$.
Thus $u_i$ is an integer that is not divisible by $p$.
Let $f_i\in \Z^{2g}$ be defined as $u_i$ times the $i$-th standard basis vector.
Then
\[
\lambda_M(f_i, f_i) =
f_i^{\top} M^{-1} f_i = \frac{s_{ii}u_i^2}{t_{ii}}.
\]
We have $\ord_p(\lambda_M(f_i, f_i)) = \ord_p(s_{ii}) + 2\ord_p(u_i) - \ord_p(t_{ii})
 = 0 + 2\cdot 0 -k_i = -k_i$.
It follows that the order of $[f_i]$ in $\coker M$ equals $p^{k_i}$.
Moreover, for $i\neq j$,
\[
f_i^{\top} M^{-1} f_j = \frac{s_{ij} u_i u_j}{t_{ij}},
\]
which lies in $\mathbb{Z}$.
It then follows that $f_{2g-m+1}, \ldots, f_{2g}$ form an orthogonal basis of a
$\Z/p^{k_{2g-m+1}} \oplus \dots \oplus\Z/p^{k_{2g}}$ summand of $\coker M$.
On the summand $\Z/p^{k_i}$, the isometry type of $\lambda_M$ 
is $A_{p^{k_i}}$ or $B_{p^{k_i}}$ depending on the sign of
\[
\legendre{p^{k_i}\lambda_M(f_i, f_i)}{p} = 
\legendre{p^{k_i}s_{ii}u_i^2/t_{ii}}{p}.
\]
So we find, using the multiplicativity of the Legendre symbol
\[
(-1)^{\sum_{k = 1}^{\infty} r_{p,k}(\lambda_M)} =
\legendre{p^{\alpha}U^2\prod_{i=2g-m+1}^{2g} s_{ii} / t_{ii}}{p},
\]
where we write $U = \prod_{i=2g-m+1}^{2g} u_i$, which is an integer not divisible by $p$.
On the other hand, from the Leibniz formula for the determinant,
we see that the following integers are equivalent modulo $p$:
\[
p^{\alpha} U^2 \det(N') \equiv
p^{\alpha} U^2 \prod_{i={2g-m+1}} \frac{s_{ii}}{t_{ii}} \pmod{p}.
\]
Hence it follows that
\begin{equation}\label{eq:detc}
(-1)^{\sum_{k = 1}^{\infty} r_{p,k}(\lambda_M)} =
\legendre{p^{\alpha} U^2 \det(N') }{p}.
\end{equation}
Putting it all together, we find by using \eqref{eq:deltap}
\begin{align*}
\delta_p(M) & = \left(\frac{(-1)^{m + g}\cdot\det N}{p}\right)
 = \left(\frac{(-1)^{m + g}\cdot U^2\cdot \det N}{p}\right) \\
 \intertext{by the multiplicativity of the Legendre symbol and \eqref{eq:jacobi}}
 & = \legendre{(-1)^{m + g}}{p}
\legendre{p^{-\alpha}\det(M)}{p}
\legendre{p^{\alpha} U^2 \det(N')}{p} \\
\intertext{by \eqref{eq:detM} and \eqref{eq:detc}}
& =
\legendre{(-1)^{m + g}}{p} \cdot \legendre{(-1)^{\frac{\det M - 1}2 + g}q}{p} 
\cdot 
(-1)^{\sum_{k=1}^\infty r_{p,k}(\lambda_M)} \\
\intertext{again by multiplicativity of the Legendre symbol, and \eqref{eq:legendre-1}}
& =
\legendre{q}{p} \cdot 
(-1)^{\sum_{k=1}^\infty r_{p,k}(\lambda_M)}
\cdot
(-1)^{\frac{p-1}{2}(m + \frac{\det M - 1}2)}.
\end{align*}
This last expression equals the claimed expression for $\delta_p(M)$,
since
$m + \frac{\det M - 1}2 \equiv \alpha + m + \frac{q-1}2 \pmod{2}$
if $p \equiv 3\pmod{4}$.
\end{proof}

We now show that the obstruction given in \cref{thm:signedu} is equivalent to Lickorish's obstruction in the case that $L = K$ is a knot with ${u(K) = 1}$.
Lickorish's obstruction \cite{zbMATH03982071}, \cite[Proposition~2.1]{zbMATH04048673}, see also \cite[Theorem~4.5]{zbMATH06425399}%
\footnote{Each of the formulae given in \cite[Proposition~2.1]{zbMATH04048673} and \cite[Theorem~4.5]{zbMATH06425399} are equivalent to \eqref{eq:lickorish}, using different conventions.}
 says that if $K$ can be unknotted by changing a single crossing with sign~$\zeta$, then
there exists a generator $h$ of $H_1(D_K; \mathbb{Z})$ with%
\begin{equation}\label{eq:lickorish}
\lambda_{D_K}(h,h) = \frac{2\cdot\zeta\cdot (-1)^{\frac{\det K - 1}2}}{\det(K)} \in \mathbb{Q}/\mathbb{Z}.
\end{equation}
\begin{proposition}\label{rmk:lickorish}
For all knots $K$ and $\zeta \in \{\pm 1\}$,
the following are equivalent:
\begin{enumerate}[label=(\roman*)]
\item There is a generator $h$ of $H_1(D_K; \mathbb{Z})$ satisfying \cref{eq:lickorish}.
\item For all prime divisors $p$ of $\det K$ we have $d_p(K) = 1$, and
\begin{equation}\label{eq:lickorish2}
\begin{aligned}
& p \equiv 1 \,\Longrightarrow\, \delta_p(K) = 1,
&& p \equiv 3 \,\Longrightarrow\, \delta_p(K) = \zeta,\\
& p \equiv 5 \,\Longrightarrow\, \delta_p(K) = -1,
&& p \equiv 7 \,\Longrightarrow\, \delta_p(K) = -\zeta.
\end{aligned}
\end{equation}
\end{enumerate}
\end{proposition}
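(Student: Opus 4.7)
The plan is to use \cref{prop:singdetwall} to translate $\delta_p(K)$ into data about $\lambda_{D_K}$, and then show that both (i) and (ii) become the \emph{same} constraint on the value of $\lambda_{D_K}$ at a generator of $H_1(D_K;\mathbb{Z})$.

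Both (i) and (ii) force $H_1(D_K;\mathbb{Z})$ to be cyclic: (i) asserts that $h$ generates, while (ii) requires $d_p(K)=1$ for every prime divisor $p$ of $\det K$, which means each $p$-primary summand is cyclic. Assuming cyclicity, I would fix any generator $h$ and write $\lambda_{D_K}(h,h)=N/\det K\in\mathbb{Q}/\mathbb{Z}$ with $N\in\mathbb{Z}$ coprime to $\det K$ by non-singularity of $\lambda_{D_K}$. For each prime divisor $p$ of $\det K$, write $\det K=p^{\alpha}q$ with $p\nmid q$; then $qh$ generates the $p$-primary summand $\mathbb{Z}/p^{\alpha}$ of $H_1(D_K;\mathbb{Z})$ and satisfies $p^{\alpha}\lambda_{D_K}(qh,qh)\equiv qN\pmod p$, so the linking form on this summand is $A_{p^{\alpha}}$ or $B_{p^{\alpha}}$ according as $\legendres{qN}{p}=\pm 1$.

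Substituting this information into \cref{prop:singdetwall} and simplifying (via multiplicativity of the Legendre symbol and $\legendres{q^{2}}{p}=1$) would yield the key identity
\[
\delta_p(K)=\legendre{N}{p}\cdot(-1)^{\frac{p-1}{2}\bigl(\alpha+1+\frac{q-1}{2}\bigr)}.
\]
Since any two generators of $H_1(D_K;\mathbb{Z})$ differ by a unit whose square acts on $N$, the Chinese Remainder Theorem shows that (i) is equivalent to the condition $\legendres{N}{p}=\legendres{2\zeta(-1)^{(\det K-1)/2}}{p}$ for every prime divisor $p$ of $\det K$. Combining this with the displayed identity and the elementary congruences $\tfrac{\det K-1}{2}\equiv\alpha\cdot\tfrac{p-1}{2}+\tfrac{q-1}{2}\pmod 2$ and $\tfrac{p^{2}-1}{4}\equiv 0\pmod 2$, all the sign exponents collapse and (i) becomes equivalent to requiring $\delta_p(K)=\legendres{-2\zeta}{p}$ for every prime divisor $p$ of $\det K$. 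A routine case check on $p\pmod 8$ via \cref{eq:legendre-1} and \cref{eq:legendre2} then confirms that this matches \cref{eq:lickorish2}.

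The main obstacle is the organizational bookkeeping: one must track the various sign exponents carefully to see that the sign produced by \cref{prop:singdetwall} telescopes, modulo $2$, to $(p-1)/2$, so as to produce the $\legendres{-1}{p}$ factor that combines with $\legendres{2\zeta}{p}$ into $\legendres{-2\zeta}{p}$. Beyond that, the argument uses only \cref{prop:singdetwall} and elementary number theory.
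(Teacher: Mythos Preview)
Your proposal is correct and follows essentially the same route as the paper's proof: both reduce to the cyclic case, fix a generator, express the Wall invariant of the $p$-primary summand via the Legendre symbol of $qN$ (the paper writes $qa$), invoke \cref{prop:singdetwall} with $m=1$ to obtain $\delta_p(K)=\legendres{N}{p}\cdot(-1)^{\frac{p-1}{2}(\alpha+1+\frac{q-1}{2})}$, rewrite (i) as the quadratic-residue condition $\legendres{N}{p}=\legendres{2\zeta(-1)^{(\det K-1)/2}}{p}$ for each prime $p\mid\det K$, and then collapse the signs to $\delta_p(K)=\legendres{-2\zeta}{p}$ before the case check modulo~$8$. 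The only cosmetic difference is that you make the congruence $\tfrac{\det K-1}{2}\equiv\alpha\cdot\tfrac{p-1}{2}+\tfrac{q-1}{2}\pmod 2$ and the vanishing of $\tfrac{p^2-1}{4}\pmod 2$ explicit, whereas the paper absorbs the former into a remark at the end of the proof of \cref{prop:singdetwall} and leaves the final simplification as $\delta_p(K)=\legendres{2\zeta}{p}\cdot(-1)^{(p-1)/2}$, which is of course the same thing.
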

\begin{proof}
Rather than showing the implications (i) $\Rightarrow$ (ii) and
(ii) $\Rightarrow$ (i) separately, we are going to prove directly that (i) and (ii) are equivalent.
First, note that (i) and (ii) both imply that $H_1(D_K;\Z)$ is cyclic.
So we assume this throughout the proof. We also fix an arbitrary generator $h'$ of $H_1(D_K;\Z)$,
and write $\lambda_{D_K}(h', h') = a/\det K$, with $a$ and $\det K$ coprime.

Now, (i) is equivalent to the existence of an integer $b$ coprime with $\det K$
such that $h = bh'$ satisfies \eqref{eq:lickorish}, i.e.
\begin{align*}
\lambda_{D_K}(bh', bh') & = 2\zeta(-1)^{\frac{\det K - 1}2} / \det(K) \in \mathbb{Q}/\mathbb{Z} \Leftrightarrow \\
b^2 a & \equiv 2\zeta(-1)^{\frac{\det K - 1}2} \pmod{\det(K)},
\end{align*}
which is equivalent to
$c\coloneqq 2\cdot a \cdot \zeta\cdot  (-1)^{\frac{\det K - 1}2} \in \Z$ being a quadratic residue modulo $\det K$.
This is the case if and only if $c$ is a quadratic residue modulo $p$ for all prime divisors $p$ of $\det K$, which is equivalent to $\legendres{c}{p} = 1$.
In what follows, we show that for a fixed prime divisor $p$ of $\det K$, $\legendres{c}{p} = 1$ is equivalent to~\cref{eq:lickorish2}. This will imply that (i)~$\Leftrightarrow$~(ii) as claimed.

Write $\det K = p^{\alpha} q$ with $q$ not divisible by~$p$. Then, by
the multiplicativity of the Legendre symbol and \cref{eq:legendre-1},
\begin{equation}\label{eq:cp}
\legendre{c}{p} = \legendre{2 a \zeta\cdot  (-1)^{\frac{\det K - 1}2}}{p}
= \legendre{2 a \zeta}{p} \cdot  (-1)^{\frac{p - 1}2(\alpha + \frac{q-1}{2})}.
\end{equation}

Note that $q h'$ is a generator of the $\Z/p^{\alpha}$ summand of $H_1(D_K; \mathbb{Z})$.
Since
$\lambda_{D_K}(qh', qh') = qa / p^{\alpha}$,
the Wall isometry type of $\lambda_{D_K}$ on that summand
is $A_{p^{\alpha}}$ if $qa$ is a quadratic residue mod~$p^{\alpha}$, and $B_{p^{\alpha}}$ if it is a non-residue.
Since a number is a quadratic residue modulo~$p^{\alpha}$ if and only if it is a quadratic residue modulo~$p$,
it follows from \cref{prop:singdetwall} (with $m=1$) that
\[
\delta_p(K) = \legendre{q}{p} \cdot \legendre{qa}{p} \cdot (-1)^{\frac{p-1}{2}(\alpha+1+\frac{q-1}{2})}
= \legendre{a}{p} \cdot (-1)^{\frac{p-1}{2}(\alpha+1+\frac{q-1}{2})}.
\]
Combining this equation with \eqref{eq:cp}, one finds
that $\legendres{c}{p} = 1$ is equivalent to
\begin{align*}
\delta_p(K) & = \legendre{2\zeta}{p}\cdot(-1)^{\frac{p - 1}2(\alpha + \frac{q-1}{2})}\cdot (-1)^{\frac{p-1}{2}(\alpha+1+\frac{q-1}{2})} \Leftrightarrow \\ 
\delta_p(K) & = \legendre{2\zeta}{p}\cdot(-1)^{\frac{p - 1}2}.
\end{align*}
Considering the four possible remainders of $p$ modulo 8, one finds this to be equivalent to \cref{eq:lickorish2}.
\end{proof}

\section{Evaluations of knot polynomials}
\label{sec:eval}
\cref{Main-Theorem}, which we restate here, now follows from \cref{prop:singdetwall}.
\MainTheorem*
\begin{proof}
By Lipson's theorem, which states that $\varepsilon_K = \delta_3(K)$, where $\varepsilon_K$ is as in \eqref{eq:atomega} and 
$\delta_3$ is the singular determinant from \cref{def:singdetL},
we have
\[
V_K(e^{2\pi i/6}) = \delta_3(K) \cdot (i\sqrt{3})^{\dim H_1(D_K; \mathbb{F}_3)}.
\]
Now, choose a symmetrized Seifert matrix $M$ for $K$.
By \cref{prop:singdetwall}, we then have
\[
\delta_3(K) = \delta_3(M) = \legendre{q}{3} 
 \cdot (-1)^{\sum_{k=1}^\infty r_{3,k}(M)} \cdot (-1)^{\alpha+\dim H_1(D_K; \mathbb{F}_3)+\frac{q-1}{2}}.
\]
To conclude the proof, one
combines these two formulae and observes that
\[
\legendre{q}{3}\cdot (-1)^{\frac{q-1}{2}} = \nu(q),
\]
for $\nu$ as defined above.
\end{proof}

An analogous result holds for the $Q$-polynomial. Like the Jones polynomial, the $Q$-polynomial is uniquely characterized by a skein relation, but with the difference that its relation applies to unoriented links. 
Namely, the \emph{$Q$-polynomial} $Q_L(z) \in \Z[z^{\pm 1}]$ for an unoriented link is defined by the value~$1$ for the unknot and the skein relation~\cite{zbMATH03958240,MR2635346}
\[
Q_{L_+}(z) + Q_{L_-}(z)
=
z(Q_{L_0}(z) + Q_{L_\infty}(z)),
\]
where $L_+, L_-, L_0, L_\infty$ are unoriented links that locally differ as follows:
\begin{figure}[h]
\centering
\includegraphics[scale=.3]{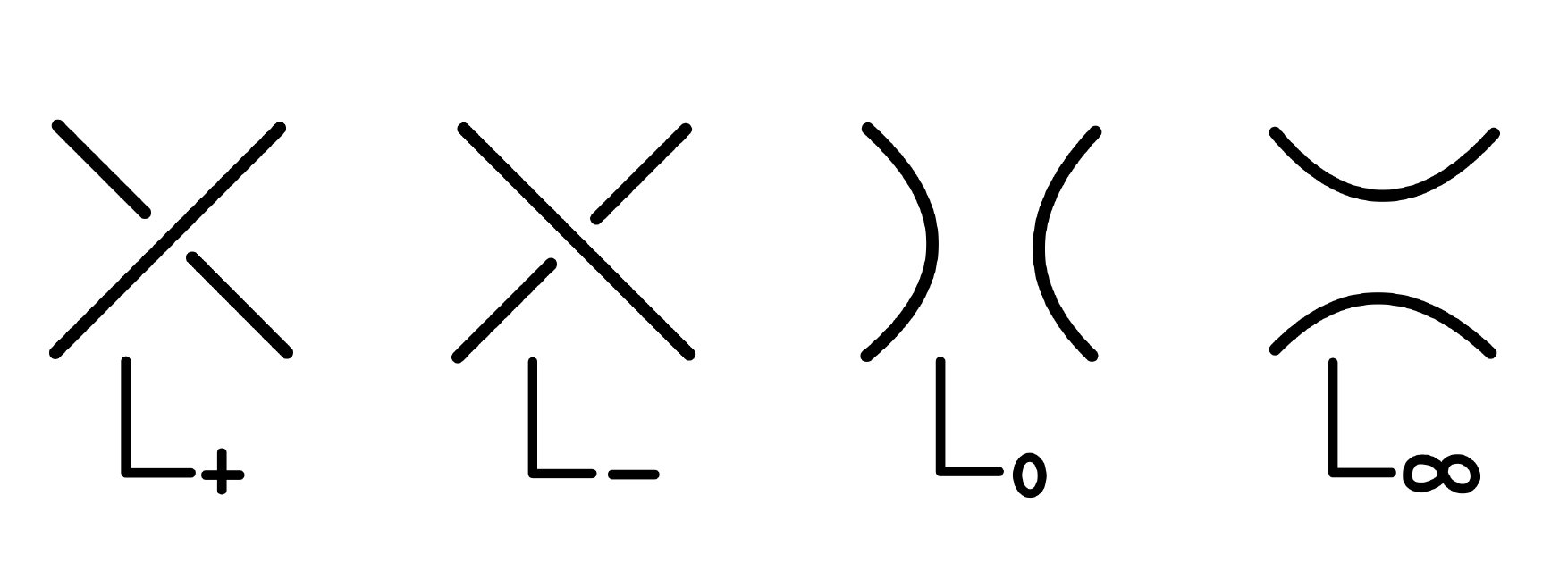}
\end{figure}

In the literature, the $Q$-polynomial is sometimes defined as the Kauffman polynomial evaluated at~$1$ in its first variable~\cite{zbMATH04184437,zbMATH04138782}. 
It is well-known that
\[
Q_L\Bigl(\frac{\sqrt{5}-1}{2}\Bigr) = \pm (\sqrt{5})^{\dim H_1(D_L; \mathbb{F}_5)}.
\]
In fact, Rong proved for all links~$L$ that~\cite{zbMATH04184437}
\begin{equation}\label{eq:rong}
Q_L\Bigl(\frac{\sqrt{5}-1}{2}\Bigr)
=
\delta_5(L)
\cdot
(\sqrt{5})^{\dim H_1(D_L; \mathbb{F}_5)}.
\end{equation}

The following theorem expresses the sign on the right-hand side in terms of the double branched covering of~$L$, in the case that $L$ is a knot.
\begin{theorem}\label{thm:q}
Let a knot $K$ be given. Denote by $q$ the unique non-negative integer such that $q$ is not divisible by~$5$ and $\det(K) = 5^\alpha q$, for some $\alpha \geq 0$.
Then we have
\[
Q_K\Bigl(\frac{\sqrt{5}-1}{2}\Bigr)
=
\legendre{q}{5}\cdot 
(-1)^{\sum_{k=1}^\infty \, r_{5,k}(K)}\cdot
(\sqrt{5})^{\dim H_1(D_K; \mathbb{F}_5)}.
\]
\end{theorem}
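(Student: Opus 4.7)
The plan is to mirror the proof of \cref{Main-Theorem} almost verbatim, replacing the prime $3$ by $5$ and the input ingredient (Lipson's theorem $\varepsilon_K = \delta_3(K)$) by Rong's formula~\eqref{eq:rong}. Concretely, I would start from \eqref{eq:rong}, which states that $Q_K\bigl((\sqrt{5}-1)/2\bigr) = \delta_5(K)\cdot(\sqrt{5})^{\dim H_1(D_K;\mathbb{F}_5)}$, and then use \cref{prop:singdetwall} to rewrite $\delta_5(K)$ in terms of the linking form invariants of $D_K$.

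First I would choose any symmetrized Seifert matrix $M$ of $K$, which presents the linking form $\lambda_{D_K}$ (so that $r_{5,k}(\lambda_M) = r_{5,k}(K)$) and satisfies $\delta_5(M) = \delta_5(K)$ by \cref{def:singdetL}. Writing $|\det M| = 5^\alpha q$ with $5 \nmid q$ and $m = \dim H_1(D_K;\mathbb{F}_5)$, \cref{prop:singdetwall} applied with $p = 5$ yields
\[
\delta_5(M) = \legendre{q}{5}\cdot(-1)^{\sum_{k=1}^\infty r_{5,k}(K)}\cdot (-1)^{\frac{5-1}{2}\bigl(\alpha + m + \frac{q-1}{2}\bigr)}.
\]
Since $(5-1)/2 = 2$ is even, the third factor is just $1$, matching the general principle noted in \cref{rmk:13} that for $p\equiv 1\pmod 4$ no parity correction is needed. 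Combining this simplified expression with Rong's identity gives the claimed formula.

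In this sense the hardest work has already been done: the technical core is the linking-form computation inside \cref{prop:singdetwall}, and the only thing one needs to verify for \cref{thm:q} is that substituting $p = 5$ collapses the correction sign. The one step worth pausing over is the identification $r_{5,k}(K) = r_{5,k}(\lambda_M)$, i.e.\ that a symmetrized Seifert matrix of $K$ presents the linking pairing of the double branched cover; this is recalled in \cref{sec:lkform}, so it can be cited rather than redone. No further quadratic reciprocity or case analysis modulo $8$ is needed, in contrast to \cref{Main-Theorem} where the factor $\nu(q)$ was required to absorb the $p=3$ correction.
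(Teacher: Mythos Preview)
Your proposal is correct and follows exactly the paper's own proof: combine Rong's formula~\eqref{eq:rong} with \cref{prop:singdetwall} for $p=5$, noting that the factor $(-1)^{\frac{p-1}{2}(\alpha+m+\frac{q-1}{2})}$ collapses to~$1$ since $(5-1)/2$ is even. The additional remarks you make (about $r_{5,k}(K)=r_{5,k}(\lambda_M)$ and the contrast with the $p=3$ case) are accurate but not needed for the argument.
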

\begin{proof}
We proceed similarly as in the proof of \cref{Main-Theorem}:
namely, the statement follows by combining
Rong's formula~\cref{eq:rong}
with
\cref{prop:singdetwall}, which for $p = 5$ reads
\[
\delta_5(K) = 
\legendre{q}{5}\cdot 
(-1)^{\sum_{k=1}^\infty \, r_{5,k}(K)}.\qedhere
\]
\end{proof}
\cref{rmk:lickorish} and Rong's formula~\cref{eq:rong} also lead us to
a counterexample to the following conjecture of Stoimenow.
\begin{conjecture}[{\cite[Conjecture~7.3]{zbMATH02152374}}]\label{conj:stoimenow}
Let $K$ be a knot with cyclic $H_1(D_K; \mathbb{Z})$ and $\det K$ divisible by~$5$.
Then
\[
Q_K\Bigl(\frac{\sqrt{5}-1}{2}\Bigr)
=
\left\{
\begin{array}{ll}
-\sqrt{5} & \text{ if } \exists h \in H_1(D_K; \mathbb{Z}): \lambda_{D_K}(h,h) = \pm 2 / \det K, \\
+\sqrt{5} & \text{ if } \nexists h \in H_1(D_K; \mathbb{Z}): \lambda_{D_K}(h,h) = \pm 2 / \det K. \\
\end{array}
\right.
\]
\end{conjecture}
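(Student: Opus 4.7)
The plan is to disprove the conjecture by exhibiting an explicit counterexample, after first reformulating both sides of the purported equivalence in terms of singular determinants. By Rong's formula \eqref{eq:rong}, assuming $H_1(D_K;\mathbb{Z})$ is cyclic and $5$ divides $\det K$ (so $\dim H_1(D_K;\mathbb{F}_5)=1$), we have
\[
Q_K\Bigl(\tfrac{\sqrt{5}-1}{2}\Bigr)=\delta_5(K)\cdot\sqrt{5},
\]
so the ``$-\sqrt 5$'' case of the conjecture is just $\delta_5(K)=-1$. On the other hand, by \cref{rmk:lickorish}, the existence of a generator $h$ with $\lambda_{D_K}(h,h)=\pm 2/\det K$ corresponds to \eqref{eq:lickorish} being solvable for some $\zeta\in\{\pm 1\}$, which is equivalent to the system \eqref{eq:lickorish2} being simultaneously satisfied at every prime divisor of $\det K$ for a common choice of $\zeta$.

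Next I would identify the failure mechanism. Since $5\equiv 5\pmod 8$, the condition at $p=5$ in \eqref{eq:lickorish2} is $\delta_5(K)=-1$ independently of $\zeta$, and matches exactly the left-hand case of Stoimenow's conjecture. The conjecture therefore reduces to the implication that $\delta_5(K)=-1$ forces the constraints in \eqref{eq:lickorish2} to be simultaneously satisfiable for some $\zeta$ at all \emph{other} prime divisors of $\det K$. This can easily fail: any prime divisor $p$ of $\det K$ with $p\equiv 1\pmod 4$ imposes a $\zeta$-independent condition on $\delta_p(K)$, and nothing prevents that condition from being violated while still having $\delta_5(K)=-1$.

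Guided by this analysis, I would search among knots with $\det K=5\cdot p$ for a small prime $p\equiv 1\pmod 4$ (say $p=13$) and $H_1(D_K;\mathbb{Z})\cong\mathbb{Z}/5p$, looking for one in which $\delta_5(K)=-1$ but $\delta_p(K)$ has the ``wrong'' sign prescribed by \eqref{eq:lickorish2}. For such a knot, Rong's formula forces $Q_K(\frac{\sqrt 5-1}{2})=-\sqrt 5$, while by \cref{rmk:lickorish} no generator $h$ with $\lambda_{D_K}(h,h)=\pm 2/\det K$ can exist, contradicting \cref{conj:stoimenow}. The main obstacle is computational rather than conceptual: one must locate an actual knot in the tables with the prescribed determinant and cyclic first homology of $D_K$, and then verify the relevant values of $\delta_5$ and $\delta_p$ directly from a symmetrized Seifert matrix. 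Cyclicity of $H_1(D_K;\mathbb{Z})$ is a routine Smith normal form computation, and both singular determinants reduce to a single Legendre symbol via the formula in the definition, so the verification is finite and elementary.
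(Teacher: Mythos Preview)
Your plan is correct and matches the paper's approach exactly: reduce the left-hand side to $\delta_5(K)=-1$ via Rong's formula, translate the right-hand side into the simultaneous conditions of \cref{rmk:lickorish}, and exhibit a knot where $\delta_5=-1$ but the $\zeta$-independent condition at another prime $p\equiv 1\pmod 4$ fails. The paper's concrete counterexample is $P(5,17,5)$ with symmetrized Seifert matrix $\bigl(\begin{smallmatrix}22&17\\17&22\end{smallmatrix}\bigr)$, giving $\det K=3\cdot 5\cdot 13$, $\delta_5(K)=-1$, and $\delta_{13}(K)=+1$ (violating $\delta_{13}=-1$ from \eqref{eq:lickorish2}); further table examples with the same data are $12a628$, $12a665$, $12a828$, $12a1044$.
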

\begin{example}[\bfseries Counterexample for \cref{conj:stoimenow}]
\label{ex:stoimenow}
Let $K$ be a knot with Seifert matrix 
$
\bigl(\begin{smallmatrix}
22	&	17\\ 17	&	22
\end{smallmatrix}\bigr)
$,
such as the pretzel knot $P(5,17,5)$.
Then, one computes $\det K = 195 = 3\cdot 5\cdot 13$, $\delta_5(K) = -1$ and $\delta_{13}(K) = 1$.
On one hand, \cref{eq:rong} now implies that $ Q_K(\tfrac{\sqrt{5}-1}{2}) = - \sqrt{5}$.
On the other hand,
any $h\in H_1(D_K; \mathbb{Z})$ satisfying
\[
\lambda_{D_K}(h,h) = \frac{\pm 2}{\det(K)}
\]
has order $\det(K)$, i.e.~is a generator;
but by \cref{rmk:lickorish}, there does not exist such a generator.
So $K$ is a counterexample to the above conjecture.
Further counterexamples with the same values for the determinant, $\delta_5$ and $\delta_{13}$ (but genus greater than 1 and thus larger Seifert matrices)
can also be found in the knot tables \cite{knotinfo}, namely $12a628$,
$12a665$,
$12a828$, and
$12a1044$.
\end{example}

\section{The case of links}\label{sec:links}
In this section, we discuss to what extent the results that we stated for knots generalize to multi-component links~$L$.
We will need the following link invariants; details about all of which can be found in introductory textbooks such as~\cite{zbMATH01092415,zbMATH00907055}. 
Denote by $c(L)$ the number of components of~$L$;
let $\sigma(L)$ be the classical knot signature of~$L$, i.e.~the number of positive eigenvalues minus the number of negative eigenvalues of a symmetrized Seifert matrix $M$ of~$L$; 
let $\det(L) = |\det(M)|$ be the determinant of~$L$, which equals the order of $H_1(D_L;\mathbb{Z})$ if that order is finite, and $0$ otherwise;
and let $\Arf(L)\in\{0,1\}$ be the Arf invariant of $L$, which is
only defined if $L$ is \emph{proper}, i.e.~each component $K$ of $L$ has even linking number with $L\setminus K$.
The following is known about the values of the Jones polynomial at roots of unities~\cite{zbMATH03899758,zbMATH03977937,zbMATH03982072}:
\begin{align}\label{eq:eval1link}
V_L(1)            & = (-2)^{c(L) - 1}, \\ 
V_L(-1)           & = i^{\sigma(L)}\det(L), \label{eq:eval-1link} \\
V_L(e^{2\pi i/3}) & = (-1)^{c(L) - 1}, \\
V_L(i)            & = \left\{\begin{array}{rl}
(-\sqrt{2})^{c(L) - 1}(-1)^{\operatorname{Arf}(L)}  & \text{if $L$ is proper,} \\[1ex]
0 &\text{else},
\end{array}\right. \\
V_L(e^{2\pi i/6}) & = \varepsilon_L i^{c(L)-1} (i\sqrt{3})^{\dim H_1(D_L; \mathbb{F}_3)}. \label{eq:eval6link}
\end{align}
We have not found the explicit equation \cref{eq:eval-1link} for links in the literature, so for completeness, we provide a proof below in \cref{prop:alex}.

\begin{remark} We would like to give two short comments on conventions in order to explain why \eqref{eq:eval1link}--\eqref{eq:eval6link} are stated slightly differently in some of the literature.
Firstly, to evaluate a polynomial in $t^{1/2}$ at $t = e^{si}$ for some $s\in[0,2\pi)$ is understood as evaluating $t^{1/2} = e^{si/2}$.
The alternative evaluation $t^{1/2} = -e^{si/2}$ yields the extra factor $(-1)^{c(L)-1}$.
Secondly, Jones's original skein relation differs from ours in \eqref{eq:skeinjones}
by switching $L_+$ and $L_-$. Following Jones's convention, one obtains for the link $L$ the polynomial $(-1)^{c(L)-1} V_L(t^{-1})$. To be precise, $V_L(t^{-1})$ is understood as the polynomial obtained from $V_L$ by replacing 
$t^{1/2}$ with~$t^{-1/2}$.
\end{remark}

Lipson proved that $\varepsilon_L = \delta_3(L)$ holds for all links $L$~\cite{Lipson}.
Combining \cref{Main-Theorem} with \cref{thm:signedu} yields the following corollary, which is due to Traczyk~\cite{zbMATH01341751} in the case of knots.%
\footnote{Note that in \cite[Theorem~3.1]{zbMATH01341751}, $u_{+-}$ should be replaced by $u_{-+}$: changing negative crossings switches the sign, while changing positive crossings does not. The reason for the mistake appears to be that the terms $\pm i(\sqrt{3})^k$ in the proof should be replaced $\pm (i\sqrt{3})^k$. Fortunately, applications of Traczyk's theorem as in \cite{zbMATH05242998} remain valid.}
\begin{corollary}\label{cor:traczyk}
Let $L$ be a link with $c(L)$ components and unknotting number
\[
u(L) = \dim H_1(D_L; \mathbb{F}_3) - c(L) + 1.
\]
Let $u(L) = u_+ + u_-$ such that $L$ can be unknotted by changing $u_+$ positive crossings and $u_-$ negative crossings. Then
\[
V_L(e^{2\pi i /6}) = (-1)^{u_-} \cdot i^{c(L) - 1}\cdot (i\sqrt{3})^{\dim H_1(D_L; \mathbb{F}_3)}.
\]
\end{corollary}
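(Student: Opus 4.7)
The plan is that this corollary follows by straightforward substitution, once one has assembled the three ingredients stated just above the corollary: the formula \eqref{eq:eval6link} expressing $V_L(e^{2\pi i/6})$ as $\varepsilon_L \cdot i^{c(L)-1}(i\sqrt{3})^{\dim H_1(D_L;\mathbb{F}_3)}$, Lipson's identification $\varepsilon_L = \delta_3(L)$ valid for all links, and \cref{thm:signedu} which determines $\delta_p(L)$ in terms of the crossing change signs whenever the lower bound \eqref{eq:wendt} is attained.

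First I would observe that the hypothesis $u(L) = \dim H_1(D_L;\mathbb{F}_3) - c(L) + 1$ is precisely the hypothesis of \cref{thm:signedu} specialized to $p = 3$. Since $3 \equiv 3 \pmod{8}$, the corresponding case of \cref{thm:signedu} yields
\[
\delta_3(L) = (-1)^{u_-}.
\]
Next, I would substitute $\varepsilon_L = \delta_3(L) = (-1)^{u_-}$ into \eqref{eq:eval6link} to obtain exactly
\[
V_L(e^{2\pi i/6}) = (-1)^{u_-} \cdot i^{c(L) - 1} \cdot (i\sqrt{3})^{\dim H_1(D_L;\mathbb{F}_3)},
\]
which is the claimed formula.

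There is essentially no obstacle here: the work has already been done in \cref{thm:signedu} (which supplies the sign), in the cited formula \eqref{eq:eval6link} (which supplies the shape of the evaluation at a sixth root of unity for links), and in Lipson's theorem (which bridges the two). The one thing worth pausing on is the combinatorial bookkeeping of signs: \cref{thm:signedu} distinguishes the four residue classes of $p \bmod 8$, and one must confirm that $p = 3$ lands in the case producing $(-1)^{u_-}$ rather than $(-1)^{u_+}$ or $(-1)^{u(L)}$. Given this, the corollary is immediate, and no further analysis (in particular no appeal to \cref{Main-Theorem}, which is stated only for knots) is needed.
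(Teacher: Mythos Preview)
Your proof is correct and follows exactly the approach the paper indicates: combine \eqref{eq:eval6link}, Lipson's identification $\varepsilon_L = \delta_3(L)$ for links, and the $p=3$ case of \cref{thm:signedu}. Your observation that \cref{Main-Theorem} itself (being a knot statement) is not actually needed---only the Lipson ingredient underlying it---is accurate, though the paper's phrasing ``combining \cref{Main-Theorem} with \cref{thm:signedu}'' should be read as shorthand for precisely the combination you spell out.
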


While $V_L(1)$ and $V_L(e^{2\pi i/3})$ are determined by the double branched covering, the following example shows that $V_L(-1)$ and $V_L(e^{2\pi i/6})$ are not.

\begin{figure}[bt]
\centering
    \includegraphics[scale=.5]{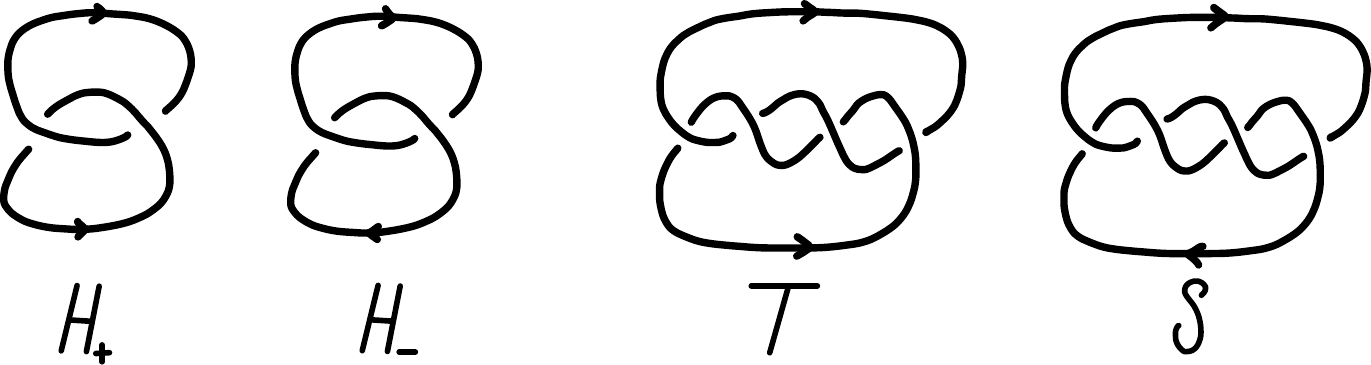}
\caption{The links appearing in \cref{ex:counterex}.}
\label{fig:counterx}
\end{figure}

\begin{example}[\bfseries\cref{Main-Theorem} cannot be extended to links]
\label{ex:counterex}
Let $H_+ = T(2,2)$ and $H_- = T(2,-2)$ denote the positive and the negative Hopf link, respectively (see~\cref{fig:counterx}).
These two links have homeomorphic double branched coverings, namely $D_{H_+} \cong D_{H_-} \cong \mathbb{R}P^3$.
However, their Jones polynomials $V_{H_+}(t) = -t^{5/2}-t^{1/2}$ and $V_{H_-}(t) = -t^{-5/2}-t^{-1/2}$ evaluate as
\begin{align*}
V_{H_+}(-1) = -2i & \neq 2i = V_{H_-}(-1), \\
V_{H_+}(e^{2\pi i/6}) = -i & \neq i = V_{H_-}(e^{2\pi i/6}).
\end{align*}

One checks that these evaluations are in accordance with the formulae given above,
since $\det H_{\pm} = 2$ and ${\dim H_1(D_{H_{\pm}}; \mathbb{F}_3) = 0}$, as can be seen from
$H_1(D_{H_{\pm}}; \mathbb{Z}/2) \cong \mathbb{Z}/2$.
Moreover, $M = (\mp 2)$ is a symmetrized Seifert matrix for~$H_{\pm}$, and thus
$\sigma(H_{\pm}) = \mp 1$ and $\delta_3(H_{\pm}) = \mp 1$.

Similarly, denote by $T$ the positive torus link $T(2,4)$, and by $S$ the link obtained from $T$ by switching the orientation of one component (see~\cref{fig:counterx}). 
Since the double branched covering does not depend on the orientations of link components, these two links have homeomorphic double branched coverings (homeomorphic to the lens space~$L(4,3)$).
However, their Jones polynomials $V_T(t) = -t^{3/2} - t^{7/2} + t^{9/2} - t^{11/2}$ and $V_S(t) = -t^{-9/2}-t^{-5/2}+t^{-3/2}-t^{-1/2}$ evaluate as
\[
V_T(i)            = \sqrt{2} \neq -\sqrt{2} = V_S(i),
\]
which is, again, in accordance with the formulae given above since $\operatorname{Arf}(T) = 1$ and $\operatorname{Arf}(S) = 0$.
\end{example}

For the $Q$-polynomial (see \cref{sec:eval}),
combining \cref{thm:q} with \cref{thm:signedu}
yields the following result,
which was proven by Stoimenow for knots~\cite[Theorem~4.1]{zbMATH02152374}.
\begin{corollary}
If the $Q$-polynomial of a link $L$ with $c(L)$ components satisfies 
\[
Q_L\Bigl(\frac{\sqrt{5}-1}{2}\Bigr) =
(-1)^{a+c(L)} (\sqrt{5})^a,
\]
for some non-negative integer~$a$, then $u(L) > a - c(L) + 1$.
\qed
\end{corollary}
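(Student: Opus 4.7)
The plan is to derive the conclusion directly from Rong's formula \eqref{eq:rong} together with \cref{thm:signedu}, by proof by contradiction.

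First, I would unpack the hypothesis using Rong's identity \eqref{eq:rong}. Since $\delta_5(L)\in\{\pm 1\}$, comparing the assumed value $(-1)^{a+c(L)}(\sqrt 5)^a$ with $\delta_5(L)\cdot(\sqrt 5)^{\dim H_1(D_L;\mathbb{F}_5)}$ forces both
\[
a = \dim H_1(D_L; \mathbb{F}_5) \qquad \text{and} \qquad \delta_5(L) = (-1)^{a+c(L)}.
\]
This is the only input we extract from the hypothesis on $Q_L$.

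Next, I would assume for contradiction that $u(L) \leq a - c(L) + 1$. The bound \eqref{eq:wendt} applied to $p=5$ gives $u(L) \geq \dim H_1(D_L; \mathbb{F}_5) - c(L) + 1 = a - c(L) + 1$ (note that if $a - c(L) + 1 < 0$ the assumption $u(L) \leq a-c(L)+1$ is already impossible, so we may assume $a-c(L)+1 \geq 0$). Combining the two inequalities yields the equality
\[
u(L) = \dim H_1(D_L; \mathbb{F}_5) - c(L) + 1,
\]
which is precisely the hypothesis needed to apply \cref{thm:signedu}.

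Since $5 \equiv 5 \pmod{8}$, \cref{thm:signedu} then yields $\delta_5(L) = (-1)^{u(L)} = (-1)^{a-c(L)+1}$. But $(-1)^{a-c(L)+1} = -(-1)^{a+c(L)}$, which contradicts the value $\delta_5(L) = (-1)^{a+c(L)}$ established in the first step. Hence $u(L) > a - c(L) + 1$, as claimed. There is no real obstacle here; the content is entirely packaged into Rong's formula and the $p=5$ case of \cref{thm:signedu}, and the corollary is just a matter of matching signs.
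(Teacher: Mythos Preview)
Your proof is correct and matches the paper's intended argument: the corollary is stated with a bare \qed, and the text preceding it indicates that it follows by combining Rong's formula \eqref{eq:rong} with the $p=5$ case of \cref{thm:signedu}, exactly as you do. (Your use of \eqref{eq:rong} rather than \cref{thm:q} is in fact the right choice, since \cref{thm:q} is only stated for knots while the corollary is for links.)
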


Finally, let us provide a proof of \cref{eq:eval-1link}.
We will use the \emph{Alexander polynomial} $\Delta_L(t) \in \mathbb{Z}[t^{\pm 1}]$, which may be defined via its value of $1$ for the unknot, and the skein relation
\begin{equation}\label{eq:skeinalex}
\Delta_{L_+}(t) - \Delta_{L_-}(t) = (t^{1/2} - t^{-1/2}) \Delta_{L_0}(t).
\end{equation}
Equivalently, $\Delta_L$ may be defined as
\[
\Delta_L(t) = 
\det(-t^{1/2} A + t^{-1/2} A^{\top}),
\]
where $A$ is any (unsymmetrized) Seifert matrix of~$L$ (we omit the definition of such a matrix~$A$, and merely note that $M = A + A^{\top}$ is a symmetrized Seifert matrix as in \cref{sec:singdet}). 
The above definitions of $\Delta_L$ give the Conway normalization of the Alexander polynomial;
in particular, $\Delta_L$ is well-defined on the nose, and not just up to multiplication with a unit of the ring~$\mathbb{Z}[t^{\pm 1}]$.
Now, we have the following result (for knots, compare with \cite[Lemma~4.9]{zbMATH06425399}).
\begin{proposition}\label{prop:alex}
For all links $L$,
\begin{align}
V_L(-1)      & = i^{\sigma(L)}\det(L), \tag{\ref{eq:eval-1link}}
 \\
\Delta_L(-1) & = i^{-\sigma(L)}\det(L), \label{eq:alex}\\
V_L(-1)      & = (-1)^{c(L) - 1} \Delta_L(-1). \label{eq:alexjones}
\end{align}
\end{proposition}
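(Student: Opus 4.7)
My plan is to prove \cref{eq:alex}, \cref{eq:alexjones}, and \cref{eq:eval-1link} in that order, deriving the last by combining the first two with a parity observation.

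For \cref{eq:alex}, I would substitute $t=-1$ (so that $t^{1/2}=i$) directly into the matrix formula $\Delta_L(t) = \det(-t^{1/2}A + t^{-1/2}A^{\top})$. This gives $\Delta_L(-1) = \det(-iA - iA^{\top}) = (-i)^n \det M$, where $n$ is the size of the symmetrized Seifert matrix $M = A + A^{\top}$. The case $\det(L)=0$ is immediate since both sides vanish. Otherwise $M$ is non-singular, and if $p$ and $q$ denote its numbers of positive and negative eigenvalues, then $p+q=n$, $p-q=\sigma(L)$, and $\det M = (-1)^q \det(L) = (-1)^{(n-\sigma)/2} \det(L)$. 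Since $(-i)^n = i^{-n}$ and $(-1)^{(n-\sigma)/2} = i^{n-\sigma}$, the product simplifies to $i^{-\sigma}$, proving \cref{eq:alex}.

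For \cref{eq:alexjones}, I would compare the skein relations at $t=-1$. The Jones skein \cref{eq:skeinjones} gives $V_{L_-}(-1) - V_{L_+}(-1) = 2i\,V_{L_0}(-1)$, whereas the Alexander skein \cref{eq:skeinalex} gives $\Delta_{L_+}(-1) - \Delta_{L_-}(-1) = 2i\,\Delta_{L_0}(-1)$---the same relation but with opposite signs on the $L_{\pm}$ side. Setting $W_L = (-1)^{c(L)-1}\Delta_L(-1)$ and using that $c(L_0) = c(L_{\pm}) \pm 1$ has opposite parity to $c(L_{\pm})$, the correction $(-1)^{c-1}$ simultaneously flips the sign on the $L_{\pm}$ side and on the $L_0$ side, converting the Alexander skein into the Jones skein: $W_{L_-} - W_{L_+} = 2i\,W_{L_0}$. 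Since also $W_{\text{unknot}} = 1 = V_{\text{unknot}}(-1)$ and the Jones polynomial is characterised by its skein and its value on the unknot, the standard reduction of any link diagram to the unknot through finitely many skein operations forces $V_L(-1) = W_L$.

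For \cref{eq:eval-1link}, combining the previous two equations gives $V_L(-1) = (-1)^{c(L)-1} i^{-\sigma(L)} \det(L)$. When $\det(L) \neq 0$, the identity $p+q=n$ from the first step shows $n \equiv \sigma(L) \pmod 2$, and $n = 2g + c(L) - 1 \equiv c(L) - 1 \pmod 2$, so $\sigma(L) \equiv c(L) - 1 \pmod 2$. Therefore $(-1)^{c(L)-1} = (-1)^{\sigma(L)} = i^{2\sigma(L)}$, whence the right-hand side equals $i^{\sigma(L)} \det(L)$. When $\det(L) = 0$, both sides of \cref{eq:eval-1link} vanish. The step I expect to be most delicate is the sign bookkeeping in the proof of \cref{eq:alexjones}: one must simultaneously handle the opposite signs in the two skein identities at $t=-1$ and the parity shift of $c(L)$ when a crossing is resolved. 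Once these cancellations are verified, the remainder reduces to linear algebra for \cref{eq:alex} and parity counting for \cref{eq:eval-1link}.
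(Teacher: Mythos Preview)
Your proof is correct and follows essentially the same route as the paper: \cref{eq:alex} via the Seifert-matrix determinant and eigenvalue counting, \cref{eq:alexjones} via comparing the two skein relations at $t=-1$, and \cref{eq:eval-1link} by combining these. You are more explicit than the paper in two places---the sign bookkeeping with $W_L=(-1)^{c(L)-1}\Delta_L(-1)$ for \cref{eq:alexjones}, and the parity identity $\sigma(L)\equiv c(L)-1\pmod 2$ for \cref{eq:eval-1link}---but these are exactly the details the paper's terse ``from this follows'' is hiding.
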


\begin{proof}
Evaluating the skein relations
of the Jones polynomial \eqref{eq:skeinjones} and
the Alexander polynomial \eqref{eq:skeinalex}
at $t = -1$ (i.e.~at~$t^{1/2} = i$) 
yields the same equation,
except that $L_+$ and $L_-$ are switched. From this follows~\cref{eq:alexjones}.

Note that \cref{eq:eval-1link} follows from \cref{eq:alex} and~\cref{eq:alexjones}. So it just remains to establish~\cref{eq:alex}.
Let $A$ be an $n\times n$ Seifert matrix of~$L$. 
We have
\[
\Delta_L(-1) = \det(-iA -iA^{\top}) = i^{-n} \det(M)
\]
for $M = A + A^{\top}$.
If $\det(M) = 0$, then $\Delta_L(-1) = 0$ and
$\det(L) = {|\det(M)| = 0}$, so the equality holds.
The case ${\det(M) \neq 0}$ remains.
Denote by $e_{\pm}$ the number of positive and negative eigenvalues of $M$, respectively. 
The sign of $\det(M)$ is $(-1)^{e_-}$.
Since $e_+ + e_- = n$ and $e_+ - e_- = \sigma(M) = \sigma(L)$,
we have $e_- = (n - \sigma(L))/2$.
Thus
\[
\det(M) = (-1)^{(n - \sigma(L))/2} \det(L).
\]
It follows that
\[
\Delta_L(-1) = i^{-n} \det(M) = 
i^{-\sigma(L)} \det(L).
\]
This concludes the proof.
\end{proof}

\bibliographystyle{myamsalpha}
\bibliography{References}
\end{document}